\newcommand{\dst}{\displaystyle}
\def\be{\begin{equation}}
\def\ee{\end{equation}}
\def\ba{\begin{array}}
\def\ea{\end{array}}
\def\eqa{\begin{eqnarray}}
\def\eqe{\end{eqnarray}}
\def\R{{\mathbb{R}}}
\newtheorem{theorem}{Theorem}
\newtheorem{corollary}{Corollary}
\newtheorem{proposition}{Proposition}
\newtheorem{lemma}{Lemma}
\newenvironment{definition}{\medskip\noindent{\it Definition. }}{ \medskip}
\newenvironment{proof}{\medskip\noindent{\it Proof.}}{\medskip}
\newenvironment{remark}{\medskip\noindent{\it Remark. }}{
\medskip}
\title{\LARGE \bf
Minimal data rate stabilization of nonlinear systems over
networks with large delays}
\author{
Claudio De Persis \\
Dipartimento di Informatica e Sistemistica\\
Sapienza Universit\`a di Roma\\
Via Ariosto 25\\ 00185 Roma, ITALY \\
{\tt depersis@dis.uniroma1.it}
}
\date{}
\begin{document}

\maketitle

%
%\title{Minimal data rate stabilization of nonlinear systems over
%networks with large delays}%\footnotemark[2]
%
%\author{C.\ De Persis\corrauth}
%
%\address{Dipartimento di Informatica e Sistemistica A.\ Ruberti, Sapienza Universit\`a
%di Roma, Via Ariosto 25, 00184 Roma, Italy.}
%
%
%\corraddr{Dr.\ Claudio De Persis, Dipartimento di Informatica e Sistemistica A.\ Ruberti,
%Sapienza Universit\`a di Roma, Via Ariosto 25, 00185 Roma, Italy. E-mail
%{\tt depersis@dis.uniroma1.it}}

%\footnotetext[2]{Please ensure that you use the most up to date class file,
%available from the RNC Home Page at\\
%\texttt{http://www.interscience.wiley.com/jpages/1049-8923/}
%\href{http://www.interscience.wiley.com/jpages/1049-8923/}{\texttt{http://www.interscience.wiley.com/jpages/1049-8923/}}
%}

%\cgsn{Publishing Arts Research Council}{98--1846389}

%\received{\today}
%\revised{}
%\noaccepted{}

%%%%%%%%%%%%%%%%%%%%%%%%%%%%%%%%%%%%%%%%%%%%%%%%%%%%%%%%%%%%%%%%%%%%%%
%%%%%%%%%%%%%%%%%%%%%%%%%%%%%%%%%%%%%%%%%%%%%%%%%%%%%%%%%%%%%%%%%%%%%%

\begin{abstract}
Control systems over networks with a finite data rate can be
conveniently modeled as hybrid (impulsive) systems. For the class of
nonlinear systems in feedfoward form,  we design a hybrid
controller which guarantees stability, in spite of the measurement noise
due to the quantization, {\em and} of
an arbitrarily large delay which
affects the communication channel. The rate at which feedback
packets are transmitted from the sensors to the actuators is shown
to be arbitrarily close to the infimal one.
\end{abstract}

\section{Introduction} \label{sec:1}
The problem of controlling systems under communication constraints
has attracted much interest in recent years.
In particular,
many papers have investigated how to cope with the finite bandwidth of
the communication channel in the feedback loop.
For the case of linear systems  (cf.~\cite{brockett.liberzon.tac00,elia.mitter.tac00,
petersen.savkin.cdc01,ishii.francis.tac02,fagnani.zampieri.tac03,nair.evans.aut03,tatikonda.mitter.tac04,
cepeda.astolfi.acc04} to cite a few)
the problem has been very well
understood, and an elegant characterization of the minimal data rate
-- that is the minimal rate at which the measured information
must be transmitted to the actuators --
above which stabilization is always possible is available.
Loosely speaking, the result  shows that the minimal data rate
is proportional to the inverse of the product of the
unstable eigenvalues of the dynamic matrix of the system.
Controlling using the minimal data rate is
interesting not only from a theoretical point of view, but also from
a practical one, even in the presence of communication
channels with a large bandwidth. Indeed, having control techniques which
employ a small number of bits to encode the feedback information
implies for instance  that the number of
different tasks which can be simultaneously carried out
is maximized, results in explicit procedures to convert the
analog information provided by the sensors into the digital form
which can be transmitted, and improves the performance
of the system (\cite{lemmon.sun.cdc06}). We refer the reader to
\cite{nair.et.al.ieee07} for an excellent survey on the topic of control
under data rate constraints. \\
The problem for nonlinear systems has been investigated as well
(cf.~\cite{liberzon.aut03,liberzon.hespanha.tac05,
nair.et.al.tac04, depersis.isidori.scl04, kaliora.astolfi.tac04, depersis.tac05,
depersis.ijrnc06}).
In \cite{liberzon.aut03}, the author extends
the results of \cite{brockett.liberzon.tac00} on quantized control
to  nonlinear systems
which are {\em input-to-state} stabilizable. For the same class,
the paper \cite{liberzon.hespanha.tac05} shows that the
approach in \cite{tatikonda.mitter.tac04} can be employed also for
continuous-time nonlinear systems, although in \cite{liberzon.hespanha.tac05}
no attention is paid on
the minimal data rate needed to achieve the result. In fact, if the
requirement on the data rate is not strict, as it is implicitly
assumed in \cite{liberzon.hespanha.tac05}, it is shown in
\cite{depersis.isidori.scl04} that the  results of
\cite{liberzon.hespanha.tac05} actually hold for the much broader
class of {\em stabilizable} systems. This observation is useful also
to address the case in which only output feedback is transmitted
through the channel. This is investigated in
\cite{depersis.ijrnc06} for the class of {\em uniformly completely
observable} systems.
The paper
\cite{nair.et.al.tac04} shows, among the other results, that
a minimal data rate theorem for {\em local} stabilizability of nonlinear
systems can be proven by focusing on linearized system. To the best of our knowledge,
{\em non} local results
for the problem of minimal data rate stabilization of nonlinear
systems are basically missing. Nevertheless, the paper \cite{depersis.tac05} has pointed out
that, if one restricts the attention to the class of nonlinear
{\em feedforward} systems, then it is possible to find the infimal data
rate above which stabilizability is possible. We recall that feedforward
systems represent a very important class of nonlinear
systems, which has received much attention in recent years
(see e.g. \cite{teel.tac96,mazenc.praly.tac96,
jankovic.et.al.tac96,isidori.book99,marconi.isidori.scl00}, to cite a few),
in which many physical systems fall (\cite{isidori.et.al.book03}),  and for which
it is possible to design stabilizing control laws in spite of
saturation on the actuators.
When
{\em no} communication channel
is present in the feedback loop,
a recent paper (\cite{mazenc.et.al.tac04}, see also \cite{mazenc.et.al.tac03}) has shown that
any
feedforward nonlinear system can be stabilized regardless  of
an arbitrarily
large delay affecting the control action.\\ In this contribution, exploiting
the results of \cite{mazenc.et.al.tac04}, we  show
that the minimal data rate theorem of \cite{depersis.tac05}
holds when an arbitrarily large delay affects the channel (in
\cite{depersis.tac05}, instantaneous delivery through the channel
of the feedback packets was assumed).
Note that the communication channel not only introduces
a delay, but also a quantization error and an impulsive behavior, since the
packets of bits containing the feedback information
are sent only at discrete times. Hence, the
methods of \cite{mazenc.et.al.tac04}, which are studied for continuous-time delay
systems, can not be directly used to deal with impulsive delay systems in the presence
of measurement errors. On the other hand, the approach of
\cite{depersis.tac05} must be modified to take into account the
presence of the delay. To do this, we
use different proof techniques than in \cite{depersis.tac05}.
Namely, while in the former paper the arguments were mainly
trajectory-based, here we employ a mix of trajectory- and
Lyapunov-based techniques. As a consequence, our
result requires an appropriate redesign not only of the parameters in the feedback law of
\cite{mazenc.et.al.tac04}, but also of the encoder and the decoder of
\cite{depersis.tac05}. Moreover, differently from \cite{depersis.tac05},
the parameters of the
feedback control law are now explicitly computed  (see Theorem
\ref{main.result} below). See \cite{liberzon.tac06} for another approach to
control
problems in the presence of delays and quantization. \\
In the next section, we present some preliminary notions useful
to formulate the problem.
The main contribution is stated in Section \ref{section.main.result}.
Building on the coordinate transformations of
\cite{teel.scl92,mazenc.et.al.tac04}, we introduce in Section \ref{section.coord}
a form for the
closed-loop system which is convenient for the
analysis discussed in Section \ref{section.analysis}.
%For the sake of simplicity, not all the proofs are presented, and they can be
%found in
%\cite{depersis.preprint06}.
In the conclusions, it is emphasized how the proposed solution is
also robust with respect to packet drop-out.
% As the results of the paper are an outgrowth
%of \cite{mazenc.et.al.tac04}, \cite{depersis.tac05},
%our presentation is along the lines of
%those papers, and only details which differ from those
%therein are reported.
The rest of the section summarizes the
notation adopted throughout the paper.

\medskip

\noindent {\bf Notation.}
Given an integer $1\le i\le \nu$, the vector
$(a_i,\ldots,a_\nu)\in \R^{\nu-i+1}$ will be succinctly denoted by the
corresponding uppercase letter with index $i$, i.e.~$A_i$. For $i=1$, we will
equivalently use the symbol $A_1$ or simply $a$. $I_i$ denotes the $i\times i$
identity matrix. ${\bf 0}_{i\times j}$ (respectively, ${\bf 1}_{i\times j}$)
denotes
an $i\times j$ matrix whose entries are all $0$ (respectively, $1$).
When only one index is present, it is meant that the matrix is a
(row or column) vector.\\
If $x$ is a vector, $|x|$ denotes the
standard Euclidean norm, i.e.~$|x|=\sqrt{x^Tx}$, while $|x|_\infty$
denotes the infinity norm $\max_{1\le i\le n} |x_i|$. The vector
$(x^T\;y^T)^T$ will be simply denoted as $(x,y)$.
$\mathbb{Z}_+$ (respectively, $\mathbb{R}_+$)
is the set of nonnegative integers (real numbers), $\mathbb{R}^n_+$ is the
positive orthant of $\mathbb{R}^n$.
A matrix $M$ is said to be Schur stable if all its eigenvalues are
strictly inside the unit circle. The symbol $||M||$ denotes the induced matrix 2-norm.\\
The symbol ${\rm sgn}(x)$, with $x$
a scalar variable,  denotes the sign  function  which is equal to $1$ if $x>
0$, $0$ if $x=0$,
and equal to $-1$ otherwise.
If $x$ is an $n$-dimensional  vector, then ${\rm sgn}(x)$ is an $n$-dimensional
vector whose $i$th
component is given by ${\rm sgn}(x_i)$. Moreover, ${\rm diag}(x)$
is an $n\times n$ diagonal matrix whose element $(i,i)$ is $x_i$.\\
Given a vector-valued function of time $x(\cdot)\,:\,\R_+\to \R^n$, the symbol
$||x(\cdot)||_\infty$ denotes the supremum norm  $||x(\cdot)||_\infty=\sup_{t\in
\R_+}|x(t)|$.
Moreover, $x(\bar t^+)$ represents the right limit $\lim_{t\to \bar
t^+}x(t)$. In the paper, two time scales are used, one denoted by the variable $t$
in which the delay is $\theta$, and
the other one denoted by $r$, in which the delay is $\tau$.
Depending on the time scale, the following two norms are used:
$||x_t||=\sup_{-2\theta\le \varsigma\le 0}|x(t+\varsigma)|$,
$||x_r||=\sup_{-2\tau\le \sigma\le 0}|x(r+\sigma)|$.
\\
The saturation
function \cite{mazenc.et.al.tac04} $\sigma\,:\,\R\to\R$ is an odd ${\cal C}^1$ function such
that $0\le \sigma'(s)\le 1$ for all $s\in \R$, $\sigma(s)=1$ for all
$s\ge 21/20$, and $\sigma(s)=s$ for all $0\le s\le 19/20$.
Furthermore, $\sigma_i(s)=\varepsilon_i\sigma(s/\varepsilon_i)$,
with $\varepsilon_i$ a positive real number. The functions
$p_i,q_i\,:\,\mathbb{R}^{n-i+1}\to
\mathbb{R}$ are those functions such that
\cite{mazenc.et.al.tac04,teel.scl92}
\be\label{pq}
\ba{c}
p_i(a_i,\ldots,a_n)=
\dst\sum_{j=i}^{n}\dst\frac{(n-i)!}{(n-j)!(j-i)!}a_j\;,\quad
q_i(a_i,\ldots,a_n)=
\dst\sum_{j=i}^{n}(-1)^{i+j}\dst\frac{(n-i)!}{(n-j)!(j-i)!}a_j\;,
\ea
\ee
with
$p_i(q_i(a_i,\ldots,a_n),\ldots,q_n(a_n))=a_i$,
$q_i(p_i(a_i,\ldots,a_n),\ldots,p_n(a_n))=a_i$.

\vspace{-0.5cm}

\section{Preliminaries}
Consider a nonlinear system
in feedforward form \cite{teel.tac96,mazenc.praly.tac96,
jankovic.et.al.tac96,marconi.isidori.scl00}, that
is a system of the form
\be\label{process}
\dot x(t)
=f(x(t),u(t))
%:= \left(\ba{c}
%f_1(X_2(t))\\
%f_2(X_3(t))\\
%\vdots\\
%f_{n-1}(X_n(t))\\
%f_{n}(u(t))
%\ea\right)
:=
%\left(\ba{cccc}
%x_2(t)+h_1(X_2(t)) &
%\ldots &
%x_n(t)+h_{n-1}(X_n(t)) &
%u(t)
%\ea\right)^T
\left(\ba{c} x_2(t)+h_1(X_2(t))\\ \ldots\\ x_n(t)+h_{n-1}(X_n(t))\\ u(t)
\ea\right)\;,
\ee
where  $x_i(t)\in \R$, $X_i(t)$ is the vector of state variables $x_{i}(t),
x_{i+1}(t), \ldots, x_n(t)$, $u(t)\in \mathbb{R}$, each function $h_i$ is $C^2$, and
there exists a positive real number $M>0$ such that for all $i=1,2,\ldots,n-1$, if
$|X_{i+1}|_\infty\le 1$, then
\be\label{M}
|h_i(X_{i+1})|\le M |X_{i+1}|^2\;.
\ee
We additionally assume that a bound on the compact set of initial
conditions is available, namely
a vector $\bar\ell\in \R_+^n$ is known for
which
\be\label{a.ic}
|x_i(t_0)|\le \bar\ell_i\;,\quad i=1,2,\ldots,n\;.
\ee
%This choice
%allows us not only to achieve an ``effective" (see following sections) way of
%encoding  information into a packet \cite{depersis.tac05} for feedback
%stabilization, but also to cope with arbitrarily large delays
%\cite{mazenc.et.al.tac04}. The paper \cite{depersis.tac05} assumed instantaneous
%delivery of the feedback packet. As in \cite{mazenc.et.al.tac04},
%\cite{depersis.tac05} the proofs are a combination of trajectory- and Lyapunov-based
%arguments, the latter in particular to prove a stability result for a delay
%impulsive cascade system.
We investigate the problem of stabilizing the system
above, when the measurements of the state variables travel through a
communication channel. There are several ways to model the effect of the channel.
In the present setting, we assume that there exists a sequence of
strictly increasing transmission times $\{t_k\}_{k\in \mathbb{Z}_+}$,
satisfying
\be\label{tts}
T_m\le
t_{k+1}-t_k\le T_M\;,\quad k\in \mathbb{Z}_+
\ee
for some positive and known
constants $T_m, T_M$, at which a packet of $N(t_k)$ bits, encoding the feedback
information, is transmitted. The packet is received at the other end of the channel $\theta$
units of time later, namely at the times $\theta_k:=t_k+\theta$. In problems of control under
communication constraints, it is interesting to characterize how often
the sensed information is transmitted to the actuators. In this
contribution,
as a measure of the data rate employed by the communication
scheme we adopt the {\em average data rate} \cite{tatikonda.mitter.tac04} defined as
\be\label{adr}
R_{av}=\dst{\lim\sup}_{k\to+\infty} \sum_{j=0}^k \dst\frac{N(t_j)}{t_k-t_0}\;,
\ee
where
$\sum_{j=0}^k N(t_j)$ is the total number of bits transmitted during the time interval
$[t_0,t_k]$. An {\em encoder} carries out the conversion of the state
variables into packets of
bits. At each
time $t_k$, the encoder first samples the state vector to obtain $x(t_k)$, and then
determines a vector $y(t_k)$ of symbols which can be transmitted through the
channel. We recall below the encoder which has been proposed in
\cite{depersis.tac05},  inspired by \cite{tatikonda.mitter.tac04,
liberzon.hespanha.tac05}, and then propose a modification to
handle the presence of the delay.
\subsection{Encoder in the delay-free case}
The encoder
in \cite{depersis.tac05} is as follows:
\be\label{encoder.no.delay}\ba{rcll}
\dot \xi(t)  &=&
f(\xi(t), u(t)) & \\
\dot\ell(t) &=& \mathbf{0}_{n} &
t\ne t_k\\[2mm]
\xi(t^+) &=& \xi(t)+g_{\cal E}(x(t),\xi(t),\ell(t)) & \\
\ell(t^+) &=& \Lambda \ell(t) &  t=t_k\\[2mm]
y(t) &=& {\rm sgn}(x(t)-\xi(t)) & t=t_k
\;,
\ea\ee
where $\xi,\ell$ is the encoder state, $y$ is the feedback information transmitted
through the channel, $\Lambda$ is a
Schur stable matrix, and
$g_{\cal E}(x,\xi,\ell)=4^{-1}{\rm diag}
\left[{\rm sgn}\left(
x-\xi
\right)\right]\ell$. The
rationale behind the encoder (\ref{encoder.no.delay}) is easily
explained (we refer the reader to \cite{tatikonda.mitter.tac04,
liberzon.hespanha.tac05, depersis.tac05} for more details).
During the time at which no new feedback information is
encoded, that is for $t\ne t_k$, the encoder tracks the state of the
system. This explains the first equation of
(\ref{encoder.no.delay}), which is a copy of the system (\ref{process}).
The (positive) values which appear in the entries of  $\ell$  are
the lengths of the edges of the
quantization region, which is a cuboid with center $\xi$.
This vector does not change during continuous flow.
At $t=t_k$, when a new feedback information must be encoded and
transmitted, the encoder splits the cuboid into $2^n$
subregions of equal volume, and select the subregion where the state
$x$ lies. The center of this subregion is taken as the new
state $\xi$ of the encoder (see the third equation in
(\ref{encoder.no.delay})), and communicates this choice of the new
state to the decoder through the symbol $y$. Note that  each
component of $y$ takes value in $\{0,\pm 1\}$, therefore $y$ can
be transmitted as a packet of bits of finite length. In
particular, if $\xi_i$ is on the left of $x_i$ then $+1$ is
transmitted, if it is on the right, then $-1$ is transmitted.
Finally, the size of the quantization region is updated, as it is
shown by the fourth equation in (\ref{encoder.no.delay}).\\
The system (\ref{encoder.no.delay})
is an {\em impulsive} system (\cite{bainov.simeonov.book89,
lakshmikantham.et.al.book, nesic.teel.tac04}) and its behavior
is as follows. At $t=t_0$, given an initial condition $\xi(t_0),\ell(t_0)$,
the updates $\xi(t_0^+),\ell(t_0^+)$ of the encoder state
and $y(t^+)$ of the output are obtained. The former update serves as initial
condition for the continuous-time dynamics, and the state $\xi(t),\ell(t)$
is computed over the interval $[t_0, t_1]$. At the endpoint of the
interval, a new update $\xi(t_1^+),\ell(t_1^+)$ is obtained and the
procedure can be iterated an infinite number of times
to compute the solution $\xi(t),\ell(t)$ for all $t$.\\
At the other end of the channel lies a decoder, which receives the
packets $y(t_k)$, and reconstructs  the state of the system. The
decoder is very similar to the encoder. In fact, we have:
\be\label{decoder.solution}\ba{rcll}
\dot \psi(t)&=& f(\psi(t), u(t)) & \\
\dot \nu(t) &=& \mathbf{0}_{n} & t\ne t_k\\[2mm]
\psi(t^+)&=& \psi(t)+ g_{\cal D}(y(t),\nu(t))
& \\
\nu(t^+) &=& \Lambda \nu(t) & t= t_k
%
%\dot \psi(t) &=& f(\psi(t), u_{\cal D}(\psi(t-\theta))) & t\ne \theta_k\\
%\dot n(t) &=& \mathbf{0}_{n\times 1} & t\ne \theta_k\\
%\psi(t^+) &=& \psi(t)+\dst\frac{1}{4}\Phi^{-1}
%(2B(t)-I)n(t) & t=\theta_k\\
%n(t^+)&=& \Lambda n(t) & t=\theta_k\;,
\ea\ee
with $g_{\cal D}(y,\nu)=4^{-1}
{\rm diag}(y)\nu$. The control law is the well-known nested saturated feedback
controller
\be\label{control}
\ba{rcl}
u(t)=\alpha(\psi(t))&=&-\dst\frac{L}{M\kappa^n}
\sigma_n\left(p_n\left(\kappa^{n-1}\dst\frac{M}{L}
\psi_{n}(t)\right)+
\sigma_{n-1}\left(p_{n-1}\left(\kappa^{n-2}\dst\frac{M}{L}\psi_{n-1}(t),
\right.\right.\right.\\
&&\left.
\kappa^{n-1}\dst\frac{M}{L}
\psi_{n}(t)\right)+
\ldots+\sigma_{i}\left(p_{i}\left(\kappa^{i-1}\dst\frac{M}{L}\psi_{i}(t),\ldots,
\kappa^{n-1}\dst\frac{M}{L}
\psi_{n}(t)\right)\right.
\\[0mm]
&&\left.\left.
+\lambda_{i-1}(t)\right)
\ldots
\right)\;,
\ea
\ee
where
\[\ba{rcl}
\lambda_{i-1}(t) &=&
\sigma_{i-1}\left(p_{i-1}\left(\kappa^{i-2}\dst\frac{M}{L}\psi_{i-1}(t),\ldots,
\kappa^{n-1}\dst\frac{M}{L}
\psi_{n}(t)\right)\right.\\
&& \left.+\ldots+ \sigma_{1}\left(p_{1}\left(\dst\frac{M}{L}\psi_{1}(t),\ldots,
\kappa^{n-1}\dst\frac{M}{L}
\psi_{n}(t)\right)\right)\ldots\right)\;,
\ea\]
the functions $p_i$ are defined in (\ref{pq}),
and
the parameters $\kappa, L$ and the saturation levels $\varepsilon_i$ of
$\sigma_i(r)=\varepsilon_i\sigma(r/\varepsilon_i)$
are to be designed (see Theorem \ref{main.result}). Note that
we assume the decoder and the actuator
to be
{\em co-located}, and hence this control law is feasible.
If the encoder and the decoder agree to set
their initial conditions to the same value,  then it is not hard to
see (\cite{depersis.tac05}) that $\xi(t)=\psi(t)$ and $\ell(t)=\nu(t)$ for all $t$.
Moreover,  one additionally proves that $\xi(t)$
is an asymptotically correct estimate of $x(t)$, and the latter
converges to zero \cite{depersis.tac05}.

\subsection{Encoders for delayed channels}

When a delay affects the channel, both the encoder and the decoder,
as well as the parameters of the controller, must be modified.
As in \cite{mazenc.et.al.tac04,teel.scl92}, we shall adopt a {\em linear} change of coordinates
in which the control system takes a special form convenient for the
analysis. Differently from \cite{depersis.tac05},
this change of coordinates plays a role also in the
encoding/decoding procedure. Denoted by $\Phi$ the nonsingular matrix
which defines the
change of coordinates, and which we specify in detail in Section \ref{section.coord}, the functions
$g_{\cal E}$, $g_{\cal D}$ which appear in the encoder and, respectively,
the decoder are  modified  as
\[
g_{\cal E}(x,\xi,\ell)=(4\Phi)^{-1}{\rm diag}
\left[{\rm sgn}\left(
\Phi(x-\xi)
\right)\right]\ell\;,\quad
g_{\cal D}(y,\nu)=(4\Phi)^{-1}
{\rm diag}(y)\nu\;.
\]
Now, observe that
the decoder does not know the
first state sample
throughout
the interval $[t_0, t_0+\theta]$,
and hence it can not provide any feedback control action. The
control is therefore  set to zero. As the successive samples   $y(t_k)$
are all received at times $\theta_k=t_k+\theta$,
the decoder becomes aware
of the value of $\xi$  $\theta$ units of time later. Hence, the
best one can expect is to reconstruct the value of $\xi(t-\theta)$ (see Lemma \ref{ds}
below),
and to this purpose  the following decoder is proposed:
\be\label{decoder.delay}\ba{rcll}
\dot \psi(t)&=& f(\psi(t), \alpha(\psi(t-\theta))) & \\
\dot \nu(t) &=& \mathbf{0}_{n} & t\ne \theta_k\\[2mm]
\psi(t^+)&=& \psi(t)+ g_{\cal D}(y(t-\theta),\nu(t))
& \\
\nu(t^+) &=& \Lambda \nu(t) & t= \theta_k\\
u(t) &=& \alpha(\psi(t))\;.
%
%\dot \psi(t) &=& f(\psi(t), u_{\cal D}(\psi(t-\theta))) & t\ne \theta_k\\
%\dot n(t) &=& \mathbf{0}_{n\times 1} & t\ne \theta_k\\
%\psi(t^+) &=& \psi(t)+\dst\frac{1}{4}\Phi^{-1}
%(2B(t)-I)n(t) & t=\theta_k\\
%n(t^+)&=& \Lambda n(t) & t=\theta_k\;,
\ea\ee
Observe that, as $\dot x(t)=f(x(t),\alpha(\psi(t)))$, and bearing in mind that
$\psi(t)$ reconstructs $\xi(t-\theta)$, the state  $x(t)$
is driven by
a control law which is a function of $\xi(t-\theta)$. This
justifies the appearance of $\psi(t-\theta)$, rather than $\psi(t)$,
in the first equation of the decoder above.\\
We also need to modify the encoder. Indeed,
as mentioned in the case with no delay,
for the encoder to work correctly, the control law (\ref{control}), and hence
$\psi(t)$,
must be available to the encoder. To reconstruct this quantity,
the following equations are added to the encoder (\ref{encoder.no.delay}):
\[
\ba{ccll}
\dot \omega(t) &=& f(\omega(t), \alpha(\omega(t-\theta))) & t\ne \theta_k\\
\omega(t^+) &=& \omega(t)+g_{\cal E}(x(t-\theta),\xi(t-\theta),\ell(t-\theta))
& t=\theta_k\;.
\ea\]
The initial conditions of the encoder and decoder are set as
\be\label{initial}
\ba{c}
\omega(t)=\mathbf{0}_n\;,\; t\in [\theta_0-\theta,\theta_0]\;,\quad
\xi(t_0)=\mathbf{0}_n\;,\quad \ell(t_0)=2\Phi\bar \ell\;,\\[2mm]
\psi(t)=\mathbf{0}_n\;,\; t\in [\theta_0-\theta,\theta_0]
\quad \nu(\theta_0)=2\Phi\bar \ell\;,
\ea
\ee
and, finally, the vector $y$ which is transmitted through the
channel takes the expression
\[
y(t) = {\rm sgn}(\Phi(x(t)-\xi(t)))\;.
\]
Overall, the equations which describe the encoder are:
\be\label{encoder.delay}
\ba{ccll}
\dot \omega(t) &=& f(\omega(t), \alpha(\omega(t-\theta))) & t\ne \theta_k\\
\dot \xi(t)  &=& f(\xi(t), \alpha(\omega(t))) &  t\ne t_k \\
\dot\ell(t) &=& \mathbf{0}_{n} &  t\ne t_k\\
%\ea\ee
%\ba{c}
%\omega(t^+)\\
%\left[\ba{c}
%\xi(t^+) \\ \ell(t^+)
%\ea\right]
%\ea
%&=&
%\ba{c}
%\omega(t)+(4\Phi)^{-1}{\rm diag}
%\left[{\rm sgn}\left(
%\Phi(x(t)-\xi(t))
%\right)\right]\ell(t)\\
%\left[\ba{c}
%\xi(t)+(4\Phi)^{-1}{\rm diag}
%\left[{\rm sgn}\left(
%\Phi(x(t)-\xi(t))
%\right)\right]\ell(t) \\ \Lambda \ell(t)
%\ea\right]
%\ea\\
%\setcounter{equation}{7}
%and the discrete one
%\be\label{encoder.solution2}
%\ba{ccll}
\omega(t^+) &=& \omega(t)+g_{\cal E}(x(t -\theta),\xi(t -\theta),\ell(t-\theta))
& t=\theta_k\\
\xi(t^+) &=& \xi(t)+g_{\cal E}(x(t),\xi(t),\ell(t)) & t=t_k \\
\ell(t^+) &=& \Lambda \ell(t) & t=t_k \\[0.0cm]
y(t) &=& {\rm sgn}(\Phi(x(t)-\xi(t))) & t=t_k
\;.
\ea\ee
The following can be easily proven:
\begin{lemma}\label{ds}
In the above setting, we have: (i) $\omega(t)=\psi(t)$ for all $t\ge t_0$,
(ii) $\xi(t-\theta)=\psi(t)$ and $\nu(t-\theta)=\ell(t)$ for all $t\ge
\theta_0$.
\end{lemma}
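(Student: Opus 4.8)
The plan is to prove all three identities by a forward induction over the successive intervals $[\theta_0+(m-1)\theta,\theta_0+m\theta]$, $m\ge 1$: on each such interval every delayed argument occurring in the encoder (\ref{encoder.delay}) and in the decoder (\ref{decoder.delay}) lies in the \emph{preceding} interval, so the impulsive delay equations collapse to impulsive \emph{ordinary} differential equations whose data have already been matched at the previous stage, and uniqueness of solutions of the flow between two consecutive impulse times (the vector field $f$ is locally Lipschitz, since the $h_i$ are $C^2$) then forces the identities interval by interval.

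First I would dispose of the variables $\ell$ and $\nu$, which are decoupled from the rest. Off the instants $t_k$ one has $\dot\ell\equiv\mathbf{0}_n$ and, off the instants $\theta_k=t_k+\theta$, $\dot\nu\equiv\mathbf{0}_n$, while $\ell(t_k^+)=\Lambda\ell(t_k)$ and $\nu(\theta_k^+)=\Lambda\nu(\theta_k)$; since $t_{k+1}-t_k=\theta_{k+1}-\theta_k$ and $\ell(t_0)=2\Phi\bar\ell=\nu(\theta_0)$ by (\ref{initial}), an induction on $k$ gives $\nu(t)=\ell(t-\theta)$ for all $t\ge\theta_0$, which is the identity between $\nu$ and $\ell$ asserted in (ii). Next I would record the algebraic fact that ties together the two jump maps: at $t=\theta_k$ one has $y(\theta_k-\theta)=y(t_k)={\rm sgn}(\Phi(x(t_k)-\xi(t_k)))$ and, by the previous step, $\nu(\theta_k)=\ell(t_k)$, so the definitions of $g_{\cal D}$ and $g_{\cal E}$ yield
\[
g_{\cal D}(y(\theta_k-\theta),\nu(\theta_k))=(4\Phi)^{-1}{\rm diag}[{\rm sgn}(\Phi(x(t_k)-\xi(t_k)))]\,\ell(t_k)=g_{\cal E}(x(\theta_k-\theta),\xi(\theta_k-\theta),\ell(\theta_k-\theta)).
\]
In words, the impulsive increment of $\psi$ at $\theta_k$ equals both the increment of $\omega$ at $\theta_k$ and the increment that the shifted map $t\mapsto\xi(t-\theta)$ inherits from the jump of $\xi$ at $t_k$.

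Then I would run a single induction that delivers (i) together with $\xi(t-\theta)=\psi(t)$. Put $\tilde\psi(t):=\xi(t-\theta)$ for $t\ge\theta_0$, so that $\tilde\psi(\theta_0)=\xi(t_0)=\mathbf{0}_n=\psi(\theta_0)$ by (\ref{initial}). On the base interval $[\theta_0,\theta_0+\theta]$ the delayed arguments lie in $[t_0,\theta_0]$, where $\omega\equiv\psi\equiv\mathbf{0}_n$ by (\ref{initial}) and hence $\alpha(\mathbf{0}_n)=0$; thus, between impulses, $\omega$, $\psi$ and $\tilde\psi$ all solve the same ODE $\dot z=f(z,0)$, all equal $\mathbf{0}_n$ at $\theta_0$, and all jump by the common increment displayed above at every $\theta_k$ in the interval, so uniqueness gives $\omega=\psi=\tilde\psi$ there; combined with $\omega=\psi$ on $[t_0,\theta_0]$, this proves (i) and $\xi(t-\theta)=\psi(t)$ on $[t_0,\theta_0+\theta]$. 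For the inductive step on $[\theta_0+(m-1)\theta,\theta_0+m\theta]$ the delayed arguments lie in the previous interval, on which $\omega=\psi=\tilde\psi$ by the induction hypothesis; hence on the current interval $\omega$ and $\psi$ obey, between impulses, the same forced ODE $\dot z(t)=f(z(t),\alpha(\psi(t-\theta)))$, while differentiating $\tilde\psi(t)=\xi(t-\theta)$, using $\dot\xi(s)=f(\xi(s),\alpha(\omega(s)))$ and $\omega(t-\theta)=\psi(t-\theta)$, gives $\dot{\tilde\psi}(t)=f(\tilde\psi(t),\alpha(\psi(t-\theta)))$ as well; the three coincide at the left endpoint of the interval and jump by the common increment at each $\theta_k$, so uniqueness again yields $\omega=\psi=\tilde\psi$ on this interval, closing the induction.

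The hard part is not any single estimate but the bookkeeping that legitimizes the reduction to ordinary equations: one must check at each stage that the delayed quantities $\omega(t-\theta)$, $\psi(t-\theta)$, $x(t-\theta)$, $\xi(t-\theta)$, $\ell(t-\theta)$ have already been determined and matched on the preceding interval, and that the impulse times of the shifted encoder variables coincide \emph{exactly} with the decoder impulse times $\theta_k$ — which is precisely where $t_{k+1}-t_k=\theta_{k+1}-\theta_k$ and the initializations (\ref{initial}) enter. The left/right-limit convention at the impulse instants must be applied consistently, but since the increments have been matched the pre- and post-jump values agree anyway, so this part is routine.
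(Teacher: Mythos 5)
Your proof is correct and follows essentially the same strategy as the paper's: the method of steps over intervals of length $\theta$, matching the jump increments of $\omega$, $\psi$ and the shifted $\xi$ at the instants $\theta_k$ via $y(t_k)={\rm sgn}(\Phi(x(t_k)-\xi(t_k)))$ and $\nu(\theta_k)=\ell(t_k)$, and then invoking ODE uniqueness between impulses. The only (cosmetic) difference is organizational: you fold (i) and the first part of (ii) into a single induction, whereas the paper first establishes $\omega\equiv\psi$ and then treats $\xi(\cdot-\theta)=\psi(\cdot)$ separately.
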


\begin{proof}
First observe that the solution of (\ref{process}),
(\ref{decoder.delay}), (\ref{encoder.delay})
exists for all $t\ge t_0$. Further, the thesis is
trivially true for $t\in [t_0, \theta_0]$, by (\ref{initial}).
Now,  for all $t\neq \theta_k$,
$\omega(t)-\psi(t)$ satisfies
\be\label{gr}
\ba{rcl}
\dst\frac{d}{dt}[\omega(t)-\psi(t)]&=&
f(\omega(t), \alpha(\omega(t-\theta)))- f(\psi(t), \alpha(\psi(t-\theta)))
\ea\ee
By definition
of $y$,  after the reset
the initial conditions become
\be\label{ab}\ba{rcl}
\omega(\theta_0^+)
&=& \omega(\theta_0)+(4\Phi)^{-1}{\rm diag}
\left[{\rm sgn}(\Phi(x(t_0)-\xi(t_0)))\right]\ell(t_0)\\
&=& \psi(\theta_0)+(4\Phi)^{-1}
y(t_0)\nu(\theta_0)=\psi(\theta_0^+)\\
\ea\ee
As for $t\in [\theta_0, \theta_0+\theta]$  the right-hand side
of (\ref{gr}) is equal to zero, surely we have that $\omega(t)=\psi(t)$ over such time interval
provided that no state reset occurs. Even when a state
reset occurs (this is the case for instance if the delay is larger than the sampling interval)
we can draw the same conclusion. In fact, before the reset takes place, say at
time $\bar t$,
$\omega_n(\bar t)=\psi_{n}(\bar t)$ for what we have just observed. But then,
exactly as in (\ref{ab}), immediately after the reset the two variables
are equal, and they continue to be the same up to time $\theta_0+\theta$
because the growth of their difference is identically
zero and other possible resets produce no effect as before.
Iterating these arguments shows (i). For the
second part of the thesis, observe that, by definition,
$\xi(t_0)=\psi(t_0)$, and that
$\xi(t_k)=\psi(\theta_k)$ implies $\xi(t_k^+)=\psi(\theta_k^+)$ exactly as in (\ref{ab}).
Moreover, at $t\ne \theta_k$,
$d[\xi(t-\theta)-\psi(t)]/dt=0$, for $d\xi(t-\theta)/dt$ depends
on $\alpha(\omega(t-\theta))$, while
$d\psi(t)/dt$ depends on  $\alpha(\psi(t-\theta))$, and we have
already proven that $\omega(t)=\psi(t)$ for all $t\ge t_0$.
The second  part of (ii) is trivially true.
\end{proof}

As anticipated,
the encoder and decoder we
introduced above are such that the internal state of the former is
exactly reconstructed from the internal state of the latter.
Hence,  in the
analysis to come it is enough  to focus on
the equations describing the process and the decoder only.

\vspace{-0.5cm}

\section{Main result}\label{section.main.result}
The problem we tackle in this paper is, given any value of
the delay $\theta$, find the control (\ref{control}) and
the matrices $\Lambda,
\Phi$ in (\ref{encoder.delay}) and
(\ref{decoder.delay}) which
guarantee the state of the entire closed-loop system to converge to the origin.
As recalled in the previous section,
at times $t_k$, the  measured state is sampled, packed into a sequence of $N(t_k)$
bits, and  fed back to the controller. In other words,
the information flows from the sensors to the actuator with
an average rate $R_{av}$ given by (\ref{adr}). In this setting,
it is therefore meaningful to formulate the problem of
stabilizing the system {\em while} transmitting
the minimal amount of feedback information per unit of time,
that is using the minimal average
data rate. The problem can be formally cast as follows:

\begin{definition}
System (\ref{process}) is
semi-globally asymptotically and locally exponentially stabilizable
using
an average data rate arbitrarily close to the infimal one if,
for any $\bar \ell\in \R_+^n$, $\theta>0$, $\hat R>0$, a controller (\ref{control}),
an encoder
(\ref{encoder.delay}), a decoder (\ref{decoder.delay}),
and initial conditions (\ref{a.ic}),
(\ref{initial}),
exist such that for the closed-loop system  with state
$X:=(x, \omega,\xi,\ell, \psi, \nu)$:\\
(i)
There exist a compact set $C$ containing the origin,
and  $T> 0$, such that $X(t)\in C$
for all
$t\ge T$;\\
%\[\ba{l}
%|z_j(t)|\le \dst\frac{1}{4}\varepsilon_j\quad\;,\quad
%|e_j(r)|\le \dst\frac{1}{2n}\cdot \dst\frac{1}{80^{j+1}}
%\varepsilon_{j}\;,\quad
%|e_j(r)|\le \dst\frac{1}{n}\cdot \dst\frac{1}{80^{j+1}}
%\varepsilon_{j}
%\;.
%\le \dst\frac{1}{n\cdot 160}\varepsilon_{i-1}\;.
%\ea\]
(ii) For all $t\ge T$, for some positive real numbers $k, \delta$,
%\be\label{exponential0}
$|X(t)|\le k||X_{T}||
\exp(-\delta
(t-T))$;\\
%\ee
(iii) $R_{av}<\hat R$.
\end{definition}

\begin{remark}
In the definition above we are slightly abusing the terminology,
since for the sake of simplicity no requirement on Lyapunov (simple) stability
is added. Nevertheless, it is not difficult to  prove that
the origin is a stable equilibrium point,
in the sense that, for each $\varepsilon>0$, there exists $\delta_\varepsilon>0$
such that, $||x_{\theta_0}||< \delta_\varepsilon$ implies $|x(t)|<\varepsilon$
for all $t\ge \theta_0$.
\end{remark}

\begin{remark}
Item (iii) explains what is meant by stabilizability using
an average data rate arbitrarily close to the infimal one.
As a matter of fact, (iii) requires that the average data rate can
be made arbitrarily close to zero, which
of course is the infimal data rate. It is ``infimal" rather than ``minimal",
because we could never stabilize an open-loop unstable system such as
(\ref{process}) with a zero data rate (no feedback).
\end{remark}

Compared with the papers
\cite{teel.tac96,mazenc.praly.tac96,
jankovic.et.al.tac96,marconi.isidori.scl00}, concerned with the
stabilization problem of nonlinear feedforward systems, the novelty here is
due to the presence of impulses, quantization noise which affects the
measurements and delays which affect the control action (on the other hand, we
neglect parametric uncertainty, considered in \cite{marconi.isidori.scl00}). In
\cite{teel.esaim96}, it was shown robustness with respect to
measurement errors for non-impulsive systems with no delay. In
\cite{mazenc.et.al.tac04}, the input is delayed, but neither impulses
nor measurement errors are present. Impulses and measurement errors
are considered in \cite{depersis.tac05}, where the minimal data rate
stabilization problem is solved,
but instantaneous delivery
of the packets is assumed.

\noindent
We state the main result of the paper:
\begin{theorem}\label{main.result}
If in (\ref{control})
\be\label{varepsilon}
\ba{l}
1=30\varepsilon_n=30^2\varepsilon_{n-1}=\ldots=30^n\varepsilon_{1}\\
\kappa \ge \theta \max\left\{6\cdot (30)^{n+1}n(n+1),
8n^2(8n(1+n^2)^{n-1}+1)^2\right\}\\
0<L\le
\min
\left\{
\dst\frac{\kappa}{20\cdot 30^{n}n^4 (n!)^3},
\dst\frac{\kappa}{8(1+n^2)^{n-1}\sqrt{n}n^3 (n!)^3},
 \dst\frac{M\kappa}{(n+1)!}, M
\right\}\;,
\ea\ee
then system (\ref{process}) is
semi-globally asymptotically and locally exponentially stabilizable with
an average data rate arbitrarily close to the infimal one.
\end{theorem}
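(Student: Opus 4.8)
\medskip
\noindent\textit{Sketch of the proof.}
The plan is to dispose of the three sources of difficulty, namely the delay, the quantization, and the nonlinearity, one at a time, combining the tracking identities of Lemma~\ref{ds} with a mixed Lyapunov/trajectory analysis of the nested-saturation feedback driven by a vanishing disturbance. By Lemma~\ref{ds} the decoder and encoder states are tied together ($\omega=\psi$, $\psi(t)=\xi(t-\theta)$, $\nu(t)=\ell(t-\theta)$), so the genuine degrees of freedom are $x$, $\xi$, $\ell$, and the closed loop reduces to $\dot x(t)=f(x(t),\alpha(\xi(t-\theta)))$ coupled with the impulsive encoder (\ref{encoder.delay}). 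Writing $\xi=x+\Phi^{-1}e$, the plant is the feedforward system (\ref{process}) under the \emph{delayed} nested-saturation feedback of \cite{mazenc.et.al.tac04} applied to $x(t-\theta)$, perturbed in the feedback argument by the transformed quantization error $e(t-\theta)$. I would then pass to the Teel coordinates $z=\Phi x$ of Section~\ref{section.coord} (see also \cite{teel.scl92,mazenc.et.al.tac04}), in which the closed loop is an integrator chain perturbed by the feedforward terms $h_i$ (quadratically small by (\ref{M}) while the state stays in the unit box) and driven by the nested saturations $\sigma_i$ evaluated at $z(t-\theta)+e(t-\theta)$; here $\kappa$ and $L$ are the time and amplitude scalings that, for $\theta=0$ and $e\equiv0$, reproduce the Teel--Mazenc stabilizer with saturation levels $\varepsilon_i$.

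The second step is the quantization bookkeeping. Since $\xi$ and $x$ are propagated between samples with the \emph{same} control, $\Phi(x-\xi)$ obeys the error dynamics of a feedforward system, whose linearization at the origin is nilpotent; hence, in the $z$ coordinates, its transition map over one sampling interval (of length at most $T_M$) is upper triangular with unit diagonal, up to a correction that is Lipschitz in $\Phi(x-\xi)$ with constant $O(M)$. The reset $g_{\cal E}$ brings each $|(\Phi(x-\xi))_i|$ down to at most $\ell_i(t_k)/4$ whenever the ``capture'' bound $|(\Phi(x(t_k)-\xi(t_k)))_i|\le\ell_i(t_k)/2$ holds, and $\ell$ is then scaled by $\Lambda$. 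Choosing $\Lambda$ \emph{upper triangular}, with off-diagonal entries dominating that one-step transition map and diagonal entries in $(1/2,1)$ (which makes $\Lambda$ Schur for \emph{every} $T_M$, precisely because a feedforward system has zero open-loop growth rate), one proves inductively that capture is preserved for all $k$; this requires an a priori bound $|x(t)|\le\bar c$ keeping the state in the unit box, which the analysis of the next step provides through the usual small-gain interconnection between the zoom variable $\ell$ and the plant. The outcome is $\ell(t)\to0$ geometrically, hence $e(t)\to0$ geometrically after an initial transient, with $\|e_t\|$ majorized at every instant by a geometric sequence.

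The heart of the proof is the analysis of the normal form with $e$ treated as a vanishing disturbance, following \cite{mazenc.et.al.tac04} but with Lyapunov--Krasovskii terms absorbing both $\theta$ and $e$. The lower bound on $\kappa$ in (\ref{varepsilon}) (of the form $\kappa\ge\theta\,c(n)$) makes $\theta$ small on the time scale $1/\kappa$ of the outermost loop, so that over one delay window the argument of $\sigma_n$ moves by less than the gap between the linear band $|s|\le19/20$ and full saturation $|s|\ge21/20$; the geometric spacing $1=30\varepsilon_n=\ldots=30^n\varepsilon_1$ enforces the analogous margin at every lower layer; and the bound on $L$ dominates the quadratic terms $h_i$ and the residual $e$ inside each layer. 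Then, starting from the box fixed by $\bar\ell$, $\sigma_n$ enters its linear regime after a finite time, then $\sigma_{n-1}$, and so on down to $\sigma_1$, after which the motion is governed by the integrator chain with the Hurwitz linear feedback built from the $p_i,q_i$ of (\ref{pq}), plus higher-order terms in $z$ and the disturbance $e\to0$; an ISS estimate yields $z(t)\to0$, and the exponential stability of that linear system yields local exponential convergence. Pulling back through $\Phi$ gives items (i) and (ii) of the Definition for $X=(x,\omega,\xi,\ell,\psi,\nu)$. I expect the main obstacle to be exactly this layer-by-layer estimate with both $\theta$ and $e$ present, that is, verifying that the explicit constants in (\ref{varepsilon}) close every inequality, which is essentially a careful merge of the estimates of \cite{mazenc.et.al.tac04} and \cite{depersis.tac05}.

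Finally, item (iii). Each transmitted packet $y(t_k)\in\{0,\pm1\}^n$ is coded in a fixed number $\bar N$ of bits (depending only on $n$), and by (\ref{tts}) consecutive transmissions are at least $T_m$ apart, so $\sum_{j=0}^k N(t_j)\le(k+1)\bar N$ while $t_k-t_0\ge kT_m$, whence $R_{av}\le\bar N/T_m$. Since $\Phi$ and the controller parameters $\kappa,L,\varepsilon_i$ depend only on $\theta$, $M$ and $n$, and the whole construction above goes through for \emph{every} admissible pair $T_m\le T_M$ (only $\Lambda$ being retuned to $T_M$ as above), it suffices to take $T_m>\bar N/\hat R$ to obtain $R_{av}<\hat R$. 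This is where the feedforward structure is essential: the saturated feedback keeps the plant globally bounded no matter how rarely we sample, so arbitrarily infrequent transmission is admissible and the infimal average data rate is $0$.
\medskip
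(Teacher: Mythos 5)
Your sketch follows essentially the same route as the paper: the reduction via Lemma~\ref{ds}, the passage to the Teel--Mazenc coordinates of Section~\ref{section.coord} with $\tau=\theta/\kappa$, the quantizer-capture argument with an upper-triangular Schur $\Lambda$ (Lemma~\ref{mcs}), the layer-by-layer desaturation with the quantization error treated as a vanishing disturbance (Lemmas~\ref{l3} and~\ref{isl}), the Lyapunov--Krasovskii analysis of the resulting cascade impulsive delay system once all saturations operate linearly (Corollary~\ref{cor} and Lemma~\ref{eci}), and the data-rate argument $R_{av}\le \bar N/T_m$ with stability unaffected by the choice of $T_m$. The constant-chasing you defer is exactly the substance of the paper's Lemmas~\ref{isl} and~\ref{eci}, which is where the two terms in the max defining $\kappa$ and the first two bounds on $L$ (via $P=n^3(n!)^3L\kappa^{-1}$) originate.
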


\vspace{-.25cm}

\begin{remark}
The parameters of the nested saturated controller are very similar
to those in \cite{mazenc.et.al.tac04}, although $\kappa$ and
$L$, $\varepsilon_i$'s are, respectively, larger
and smaller than the corresponding values of
\cite{mazenc.et.al.tac04}, to accommodate the presence of the quantization error.
Moreover, in the proof, the values of the matrices $\Lambda,\Phi$ are
explicitly determined as well.
\end{remark}

\vspace{-.5cm}

\begin{remark}
This result can be viewed as a nonlinear generalization of the well-known data
rate theorem for linear systems. Indeed,
the linearization of the feedforward system
at the origin is
a chain of integrators, for which the minimal data rate theorem for
linear {\em continuous-time} systems states that stabilizability is possible using
an average data rate arbitrarily close to zero.
\end{remark}

\vspace{-.5cm}

\section{Change of coordinates}\label{section.coord}
Building on the coordinate transformations in
\cite{mazenc.et.al.tac04,teel.scl92}, we put the system composed of
the process and the decoder
in a special form (see (\ref{sei}) below). Before doing this, we recall that for feedforward systems
encoders, decoders and controllers are designed in a recursive way
\cite{teel.scl92,teel.tac96,mazenc.praly.tac96,jankovic.et.al.tac96,mazenc.et.al.tac04,depersis.tac05}.
In particular, at each step $i=1,2,\ldots,n$, one focuses
on the last $n-i+1$ equations of system
(\ref{process}),
design the last $n-i+1$ equations of the encoder and the decoder, the first $i$ terms
of the nested saturated controller, and then proceed to the next step, where
the last $n-i$ equations of (\ref{process}) are considered. To this
end, it is useful to introduce additional notation to denote these
subsystems. In particular, for $i=1,2,\ldots, n$, we denote the
last $n-i+1$ equations of
(\ref{process}) by
\be\label{un}
\ba{l}
\dot X_{i}(t)=
H_i(X_{i+1}(t),u(t))=
\left(\ba{c}
x_{i+1}(t)+h_i(X_{i+1}(t))\\
\ldots\\
x_n(t)+h_{n-1}(X_n(t))\\
u(t)
\ea\right)\;,
\ea\ee
with $u(t)=\alpha(\psi(t))$,
%where it is convenient to express the control law $u(t)=\alpha(\psi(t))$ as
%\[\ba{rcl}
%\alpha(\Psi_i(t))&=&-\dst\frac{L}{M\kappa^n}
%\sigma_n\left(p_n\left(\kappa^{n-1}\dst\frac{M}{L}
%\psi_{n}(t)\right)+
%\sigma_{n-1}\left(p_{n-1}\left(\kappa^{n-2}\dst\frac{M}{L}\psi_{n-1}(t),
%\kappa^{n-1}\dst\frac{M}{L}
%\psi_{n}(t)\right)\right.\right.\\[0mm]
%&&\left.\left.+\ldots+\sigma_{i}\left(p_{i}\left(\kappa^{i-1}\dst\frac{M}{L}\psi_{i}(t),\ldots,
%\kappa^{n-1}\dst\frac{M}{L}
%\psi_{n}(t)\right)+\lambda_{i-1}(t)\right)
%\ldots
%\right)\right)\;,
%\ea
%\]
%with
%\[\ba{rcl}
%\lambda_{i-1}(t) &=&
%\sigma_{i-1}\left(p_{i-1}\left(\kappa^{i-2}\dst\frac{M}{L}\psi_{i-1}(t),\ldots,
%\kappa^{n-1}\dst\frac{M}{L}
%\psi_{n}(t)\right)\right.\\
%&& \left.+\ldots+ \sigma_{1}\left(p_{1}\left(\dst\frac{M}{L}\psi_{1}(t),\ldots,
%\kappa^{n-1}\dst\frac{M}{L}
%\psi_{n}(t)\right)\right)\ldots\right)\;,
%\ea\]
while for the last $n-i+1$ equations of the decoder (\ref{decoder.delay}) we adopt
the notation
\be\label{duu}\ba{rcll}
\dot\Psi_{i}(t)&=&H_i(\Psi_{i+1}(t),u(t-\theta))  \\
\dot N_{i}(t)&=& \mathbf{0}_{n-i+1} & t\ne \theta_k\;,\\[2mm]
\Psi_{i}(t^+)&=&\Psi_{i}(t)+(4\Phi_{i})^{-1}
{\rm diag}(Y_i(t-\theta))N_{i}(t)   \\[2mm]
N_{i}(t)&=& \Lambda_i N_{i}(t) &  t=\theta_k\;,
\ea\ee
where $N_i$ denotes the components from $i$ to $n$ of $\nu$.
Moreover, for given positive constants $L\le M$,
$\kappa\ge 1$, with $M$ defined in (\ref{M}), we
define the {\em non singular positive} matrices\footnote{The matrix
$\Phi_1$ will be simply referred to as $\Phi$.} $\Phi_i$ as:
\be\label{nsm}\ba{l}
\Phi_i X_i:=
\left[\ba{c}
p_i\left(\dst\frac{M}{L}\kappa^{i-1}x_i, \ldots,
\dst\frac{M}{L}\kappa^{n-1}x_n\right)\\
%p_{i+1}\left(\dst\frac{M}{L}\kappa^i x_{i+1}, \ldots,
%\dst\frac{M}{L}\kappa^{n-1}x_n\right)\\
\ldots\\
p_n\left(\dst\frac{M}{L}\kappa^{n-1}x_n\right)
\ea\right]\;,\\
i=1,\ldots, n\;,
\ea\ee
where the functions
$p_i$ are those defined in (\ref{pq}).
Finally, let us also introduce the change of {\em time scale}
\be\label{cts}
t=\kappa r\;,
\ee
and the {\em
input}
coordinate change
\be\label{icc}
v(r)=\kappa p_n\left(\dst\frac{M}{L}\kappa^{n-1}u(\kappa
r)\right)\;.
\ee
Then we have:
\begin{lemma}\label{coc}
Let $i\in \{1,2,\ldots,n\}$ and
\be\label{tau}
 \tau:=\theta/\kappa\;,\quad  r_k:=t_k/\kappa\;,\quad
\rho_{k}:=\theta_{k}/\kappa\;.
\ee
The change of coordinates
(\ref{cts}), (\ref{icc}), and
\be\label{stcc}
\ba{rcl}
Z_i(r)&=&\Phi_i X_i(\kappa r)\\
%W_i(r)&=&\Phi_{i}\Psi_i(\kappa r)\;,\\
E_i(r)&=&\Phi_{i}(\Psi_{i}(\kappa r)-X_i(\kappa(r-\tau)))\;,\\
P_{i}(r)&=&N_{i}(\kappa r)
\ea\ee
transforms system
(\ref{un})-(\ref{duu}) into
\be\label{sei}
\ba{rcll}
\dot{Z}_i(r)&=& \Gamma_i Z_i(r)+
\mathbf{1}_{n-i+1} v(r)+
\varphi_i(Z_{i+1}(r))\\
\dot{E}_i(r) &=&
\Gamma_i E_{i}(r)+
\varphi_i(E_{i+1}(r)+Z_{i+1}(r-\tau))\\
&& -\varphi_i(Z_{i+1}(r-\tau))
\\
\dot P_i(r) &=& \mathbf{0}_{n-i+1} & r\ne \rho_k\\[2mm]
Z_i(r^+)&=& Z_i(r)\\
E_i(r^+)&=& E_i(r)
+4^{-1}
{\rm diag}({\rm sgn}(-E_i(r)))
P_{i}(r)\\
P_i(r^+) &=& \Lambda_i P_i(r) & r= \rho_k
\;,
\ea\ee
where
\[
\Gamma_i:=\left[\ba{cccccc}
0 & 1 & 1 & \ldots & 1 & 1\\
0 & 0 & 1 & \ldots & 1 & 1\\
\vdots &\vdots &\vdots &\vdots &\vdots &\vdots \\
0 & 0 & 0 & \ldots & 0 & 1\\
0 & 0 & 0 & \ldots & 0 & 0
\ea\right]\;,
\quad
\varphi_i(Z_{i+1}):=\left[\ba{c}
f_i(Z_{i+1}) \\ f_{i+1}(Z_{i+2}) \\ \ldots \\ f_{n-1}(Z_{n}) \\
0
\ea\right]\;,
\]
where
\be\label{sette}
|f_i(Z_{i+1})|\le P |Z_{i+1}|^2\;,\quad P=n^3(n!)^3 L \kappa^{-1}\;,
\ee
provided that $|Z_{i+1}|_\infty\le (M\kappa)/(L(n+1)!)$.
\end{lemma}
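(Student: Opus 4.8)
Here is how I would approach the proof of Lemma~\ref{coc}. The plan is to perform the substitution in three stages---the changes of time and input scale, the linear coordinate change $\Phi_i$, and the estimate on the residual nonlinearity---treating the continuous flow and the impulsive resets separately.

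First I would pass to the time scale $t=\kappa r$, so that $d/dr=\kappa\,d/dt$, the delay $\theta$ becomes $\tau=\theta/\kappa$, and the event times $t_k,\theta_k$ become $r_k,\rho_k=r_k+\tau$ as in (\ref{tau}). Writing $\tilde x_j:=(M/L)\kappa^{j-1}x_j$ for the scaled coordinates, a direct computation from (\ref{un}) gives $d\tilde x_j/dr=\tilde x_{j+1}+(M/L)\kappa^{j}h_j(X_{j+1})$ for $j<n$ and $d\tilde x_n/dr=\kappa(M/L)\kappa^{n-1}u(\kappa r)=v(r)$ by (\ref{icc}) (recall $p_n(a_n)=a_n$). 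Hence, with $S_i$ the shift matrix carrying ones on the superdiagonal and $\mathbf{e}$ the last unit vector, the scaled version of subsystem (\ref{un}) reads $\dot{\tilde X}_i=S_i\tilde X_i+\mathbf{e}\,v+(\text{nonlinear terms})$. The key algebraic fact---essentially the content of the coordinate change of \cite{teel.scl92}, and a consequence of the Pascal-type identities (\ref{pq}) obeyed by the $p_i$---is that the transform $\hat\Phi_i$ assembled from the rows $p_i,\dots,p_n$ satisfies $\hat\Phi_i S_i=\Gamma_i\hat\Phi_i$ and $\hat\Phi_i\mathbf{e}=\mathbf{1}_{n-i+1}$ (the last column of $\hat\Phi_i$ is all ones because the coefficient of $a_n$ in each $p_m$ equals $1$), and is invertible with inverse built from the $q_i$. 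Since $\Phi_i=\hat\Phi_i D_i$ with $D_i$ the diagonal scaling $x_j\mapsto\tilde x_j$, applying $\Phi_i$ to $\dot{\tilde X}_i$ yields $\dot Z_i=\Gamma_i Z_i+\mathbf{1}_{n-i+1}v+\varphi_i(Z_{i+1})$: the $m$-th entry of the transformed nonlinearity is a fixed linear combination of $h_j(X_{j+1})$ over $j\ge i+m-1$, hence a function of $Z_{i+m}$ alone, which we call $f_{i+m-1}(Z_{i+m})$; this gives the first line of (\ref{sei}) and defines the $f_j$ (independently of $i$).

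Next, differentiating $E_i(r)=\Phi_i(\Psi_i(\kappa r)-X_i(\kappa(r-\tau)))$ and using that, by (\ref{duu}) and (\ref{un}), $\Psi_i$ solves $\dot\Psi_i=H_i(\Psi_{i+1},u(\cdot-\theta))$ while $X_i(\cdot-\theta)$ solves $\dot X_i(\cdot-\theta)=H_i(X_{i+1}(\cdot-\theta),u(\cdot-\theta))$: the two equations carry the \emph{same} input $u(\cdot-\theta)$, which enters only the last component, so in the difference $H_i(\Psi_{i+1},u(\cdot-\theta))-H_i(X_{i+1}(\cdot-\theta),u(\cdot-\theta))$ the input cancels identically---this is precisely where the feedforward structure is used---leaving the linear shift and the differences $h_j(\Psi_{j+1})-h_j(X_{j+1}(\cdot-\theta))$. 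Applying $\kappa\Phi_i$ and the same intertwining identity (now to $\Psi$ and to $X(\cdot-\theta)$ separately, the identity being linear in the vector of $h_j$'s) gives $\dot E_i=\Gamma_iE_i+\varphi_i(\Phi_{i+1}\Psi_{i+1}(\kappa r))-\varphi_i(\Phi_{i+1}X_{i+1}(\kappa(r-\tau)))$; and since $\Phi_{i+1}\Psi_{i+1}(\kappa r)=E_{i+1}(r)+Z_{i+1}(r-\tau)$ and $\Phi_{i+1}X_{i+1}(\kappa(r-\tau))=Z_{i+1}(r-\tau)$ by (\ref{stcc}), this is the second line of (\ref{sei}). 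The relations $\dot P_i=\mathbf{0}$ and $P_i(\rho_k^+)=\Lambda_i P_i(\rho_k)$ follow at once from (\ref{duu}) and $P_i(r)=N_i(\kappa r)$. As for the remaining resets, at $r=\rho_k$ the process $X$, and with it $Z$, does not jump, so $Z_i(r^+)=Z_i(r)$, while $E_i$ jumps by $\Phi_i[\Psi_i(\theta_k^+)-\Psi_i(\theta_k)]=\Phi_i(4\Phi_i)^{-1}{\rm diag}(Y_i(t_k))N_i(\theta_k)=\tfrac14{\rm diag}(Y_i(t_k))P_i(\rho_k)$. Finally one identifies the sign: $Y_i(t_k)={\rm sgn}(\Phi_i(X_i(t_k)-\xi|_i(t_k)))$ from (\ref{encoder.delay}) ($\xi|_i$ the last $n-i+1$ components of $\xi$), and by Lemma~\ref{ds}(ii) $\xi|_i(t_k)=\Psi_i(\theta_k)$, while $\theta_k-\theta=t_k$, so $Y_i(t_k)={\rm sgn}(\Phi_i(X_i(\theta_k-\theta)-\Psi_i(\theta_k)))={\rm sgn}(-E_i(\rho_k))$, which is the stated reset for $E_i$.

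It remains to prove the bound (\ref{sette}). Inverting $\Phi_{i+1}$ through the $q_i$'s, whose coefficient magnitudes sum to a power-of-two (hence factorial-bounded) constant, gives $|\tilde x_k|\le C_1|Z_{i+1}|_\infty$, so $|x_k|\le C_1(L/M)\kappa^{-(k-1)}|Z_{i+1}|_\infty$ for $k\ge i+1$; since $k-1\ge1$ and $\kappa\ge1$, the hypothesis $|Z_{i+1}|_\infty\le M\kappa/(L(n+1)!)$ forces $|x_k|\le1$, so (\ref{M}) applies and $|h_j(X_{j+1})|\le M|X_{j+1}|^2\le C_2(L/M)^2\kappa^{-2j}|Z_{i+1}|_\infty^2$. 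Substituting into $f_i(Z_{i+1})=\sum_{j\ge i}(\text{binomial coeff.})\,(M/L)\kappa^{j}h_j(X_{j+1})$, the factor $\kappa^{j}$ and one factor $\kappa^{-2j}$ combine to $\kappa^{-j}\le\kappa^{-1}$, one factor $M/L$ cancels against $(L/M)^2$ leaving $L/M\le1$ (here $L\le M$ is used), and the residual combinatorial constants collect into a $P$ of the claimed order $n^3(n!)^3L\kappa^{-1}$. I expect this last bookkeeping---carrying the exact binomial and factorial constants and the powers of $\kappa$ through $\Phi_i$, its inverse, and the time rescaling so that precisely the stated value of $P$ and the stated threshold on $|Z_{i+1}|_\infty$ appear, rather than merely up to an unspecified constant---to be the main (if routine) obstacle; the structural identities above and the reset computation are by comparison short.
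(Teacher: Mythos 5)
Your proof is correct and follows essentially the same route as the paper's: the paper likewise obtains the $\dot E_i$ equation by subtracting the two copies of the dynamics driven by the common delayed input and identifies the reset sign via Lemma \ref{ds}(ii), while it simply cites \cite{mazenc.et.al.tac04} for the $\dot Z_i$ equation and the bound (\ref{sette}) that you derive explicitly. (Only a cosmetic bookkeeping slip: the scalar factor surviving the cancellation $(M/L)\cdot M\cdot(L/M)^2$ is $L$, not $L/M$, which is exactly what the stated $P=n^3(n!)^3L\kappa^{-1}$ requires.)
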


\begin{proof}
The proof is easily derived from analogous arguments in
\cite{mazenc.et.al.tac04,teel.scl92}.
However, for the convenience of the Readers, we are adding it in the Appendix.
\end{proof}

In the new coordinates (\ref{cts})-(\ref{icc}), (\ref{stcc}),
the
controller (\ref{control}) takes the form
\be\label{v.r}\ba{rcl}
v(r)&=& -\sigma_n(e_n(r)+z_n(r-\tau)+\sigma_{n-1}(e_{n-1}(r)+z_{n-1}(r-\tau)+\\
&&
\ldots+\sigma_i(e_i(r)+z_i(r-\tau)+\hat\lambda_{i-1}(r))\ldots))\;,
\ea\ee
with
$\hat\lambda_{i-1}(r)=\sigma_{i-1}(e_{i-1}(r)+z_{i-1}(r-\tau)+\ldots+\sigma_1(e_1(r)+
z_1(r-\tau))\ldots)$.

\section{Analysis}\label{section.analysis}
In the previous sections,
we have introduced the encoder, the decoder and the controller.
In  this section, in order to show the stability property,
we carry out a  step-by-step analysis, where at each step
$i$, we consider the subsystem (\ref{sei})  in closed-loop
with (\ref{v.r}).
We first recall two lemmas which are at the basis of the
iterative construction. The first one, which, in a different
form, was basically given in
\cite{depersis.tac05}, shows that the decoder asymptotically tracks
the state of the process under a boundedness assumption.

\begin{lemma}\label{mcs}
Suppose (\ref{a.ic}) is true. If for some $i=1,2,\ldots, n$
there exists a positive real number $\bar{\rm Z}_{i+1}$
such that \footnote{The conditions are void for $i=n$.}
\[
||Z_{i+1}(\cdot)||_\infty\le \bar{\rm Z}_{i+1}\;,
\]
and, for all
$r\ge \rho_0$,
\[
|e_j(r)|\le p_{j}(r)/2\;,\quad j=i+1,i+2,\ldots, n\;,
\]
with\footnote{In the statement, the
continuous dynamics of the impulsive systems are trivial --
the associated vector fields are identically zero -- and hence omitted.}
\[
P_{i+1}(\rho^+)=\Lambda_{i+1}P_{i+1}(\rho)\quad
\rho= \rho_k\;,
\]
and $\Lambda_{i+1}$ a Schur stable matrix, then for all
$r\ge \rho_0$,
\[
|e_i(r)|\le p_{i}(r)/2\;,
\]
with
$p_{i}(r^+)= p_{i}(r)/2$, for $r=\rho_k$,
if $i=n$, and
\be\label{ai}\ba{rcll}
\left[\ba{c}
p_{i}(r^+)\\
P_{i+1}(r^+)
\ea\right]
&=&
\left[\ba{cc}
1/2 & \ast \\ {\bf 0}_{n-i} & \Lambda_{i+1}
\ea\right]
\left[\ba{c}
p_{i}(r)\\
P_{i+1}(r)
\ea\right] & r= \rho_k \;,
\ea\ee
if $i\in\{1,2,\ldots, n-1\}$, where
$\ast$ is a $1\times (n-i)$ row vector depending on $\bar Z_{i+1}$, $\bar \ell$, and $T_M$.
\end{lemma}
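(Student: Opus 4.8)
The plan is to establish $|e_i(r)|\le p_i(r)/2$ for $r\ge\rho_0$ by induction on the reception index $k$, while simultaneously choosing the row vector $\ast$ in (\ref{ai}) --- equivalently, finishing the design of $\Lambda_i=\left[\ba{cc}1/2 & \ast\\ {\bf 0}_{n-i} & \Lambda_{i+1}\ea\right]$ --- so that the induction closes. Observe that, whatever $\ast$ is, $\Lambda_i$ is block upper triangular, so ${\rm spec}(\Lambda_i)=\{1/2\}\cup{\rm spec}(\Lambda_{i+1})$ and $\Lambda_i$ is Schur stable; this is exactly the property that lets the lemma be re-applied at step $i-1$ with $\Lambda_{(i-1)+1}:=\Lambda_i$.

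Two observations do the work. First, the first component of the $E_i$--equation in (\ref{sei}) reads, during flow, $\dot e_i(r)=\sum_{j=i+1}^{n}e_j(r)+f_i(E_{i+1}(r)+Z_{i+1}(r-\tau))-f_i(Z_{i+1}(r-\tau))$: the feedback $v$ has cancelled, $e_i$ does not enter its own dynamics, and for $i=n$ the right-hand side is identically $0$ (so $e_n$ is constant between resets). Second, the reset in (\ref{sei}) gives $e_i(\rho_k^+)=e_i(\rho_k)-\tfrac14{\rm sgn}(e_i(\rho_k))p_i(\rho_k)$, so $|e_i(\rho_k)|\le p_i(\rho_k)/2$ yields the ``zoom'' inequality $|e_i(\rho_k^+)|\le p_i(\rho_k)/4$. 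Also, by Schur stability of $\Lambda_{i+1}$ and the initialization $P_{i+1}(\rho_0)=2\Phi_{i+1}\bar\ell$ coming from (\ref{initial}), $P_{i+1}(\cdot)$ is piecewise constant and uniformly bounded for $r\ge\rho_0$ by a quantity $\bar P_{i+1}$ depending only on $\bar\ell$. The base case is immediate: (\ref{initial}) gives $\Psi_i(\theta_0)={\bf 0}$, hence $E_i(\rho_0)=-\Phi_iX_i(t_0)$ and $p_i(\rho_0)=2(\Phi_i\bar\ell)_1$; since all coefficients of $p_i$ in (\ref{pq}) are positive and $|x_j(t_0)|\le\bar\ell_j$ by (\ref{a.ic}), the first component satisfies $|e_i(\rho_0)|\le(\Phi_i\bar\ell)_1=p_i(\rho_0)/2$.

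Assume now $|e_i(r)|\le p_i(r)/2$ on $[\rho_0,\rho_k]$. On $(\rho_k^+,\rho_{k+1}]$, $P_{i+1}$ is constant; using the hypotheses $|e_j(r)|\le p_j(r)/2$ ($j>i$), $\|Z_{i+1}(\cdot)\|_\infty\le\bar{\rm Z}_{i+1}$, and the fact that --- by (\ref{sette}) and the $C^2$ regularity of $f_i$ --- on the ball $\{|w|_\infty\le M\kappa/(L(n+1)!)\}$ one has $|f_i(A)-f_i(B)|\le K_i\max\{|A|,|B|\}\,|A-B|$, the flow identity above gives $|\dot e_i(r)|\le G_i\,{\bf 1}^{T}P_{i+1}(\rho_k)$ for a constant $G_i$ depending only on $n$, $\bar{\rm Z}_{i+1}$, $\bar P_{i+1}$ (hence on $\bar{\rm Z}_{i+1}$ and $\bar\ell$). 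Since $\rho_{k+1}-\rho_k\le T_M/\kappa$ by (\ref{tts}) and (\ref{tau}), integrating from $\rho_k^+$ and inserting the zoom inequality yields, for all $r\in(\rho_k,\rho_{k+1}]$, $|e_i(r)|\le\tfrac14 p_i(\rho_k)+\tfrac{T_M}{\kappa}G_i\,{\bf 1}^{T}P_{i+1}(\rho_k)$, the value of $p_i$ being constant there. Taking $\ast:=\tfrac{2T_M}{\kappa}G_i\,{\bf 1}_{1\times(n-i)}$ --- a vector depending only on $\bar{\rm Z}_{i+1}$, $\bar\ell$, $T_M$, as claimed --- relation (\ref{ai}) gives $p_i(\rho_k^+)=\tfrac12 p_i(\rho_k)+\ast P_{i+1}(\rho_k)$, so $\tfrac14 p_i(\rho_k)+\tfrac{T_M}{\kappa}G_i\,{\bf 1}^{T}P_{i+1}(\rho_k)\le\tfrac12 p_i(\rho_k^+)=\tfrac12 p_i(r)$ on $(\rho_k,\rho_{k+1}]$, i.e.\ $|e_i(r)|\le p_i(r)/2$ there. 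This closes the induction; the case $i=n$ is the same argument with $G_n=0$, $\ast$ absent and $p_n(\rho_k^+)=p_n(\rho_k)/2$.

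I expect the one genuinely delicate point to be the nonlinear coupling estimate: one has to check that the arguments $E_{i+1}(r)+Z_{i+1}(r-\tau)$ and $Z_{i+1}(r-\tau)$ stay inside the ball where (\ref{sette}) is valid (this is where smallness of $\bar{\rm Z}_{i+1}$ and of $\bar P_{i+1}$ enters, ultimately guaranteed by the parameter choices in Theorem \ref{main.result}), and to keep the constant $G_i$ --- and therefore $\ast$ --- a function of $\bar{\rm Z}_{i+1}$, $\bar\ell$ and $T_M$ only. Everything else is routine bookkeeping around the quantizer zoom inequality and the block-triangular structure of $\Lambda_i$.
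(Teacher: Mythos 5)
Your proposal is correct and follows essentially the same route as the paper's own proof: initialization of $|e_i(\rho_0)|\le p_i(\rho_0)/2$ from (\ref{initial}), (\ref{a.ic}) and the positivity of the coefficients of $p_i$; the quantizer ``zoom'' inequality $|e_i(\rho_k^+)|\le p_i(\rho_k)/4$ at resets; a mean-value/Lipschitz bound on $\dot e_i$ over one sampling interval in terms of ${\bf 1}^T P_{i+1}$ using the boundedness of $E_{i+1}$ and $Z_{i+1}$; and the choice of $\ast$ so that the update (\ref{ai}) absorbs the inter-sample growth, with Schur stability of $\Lambda_i$ following from its block-triangular structure. The only differences are cosmetic (you write $\ast$ against $P_{i+1}(\rho_k)$ where the paper uses $F_iT_M{\bf 1}_{n-i}\Lambda_{i+1}P_{i+1}(\rho_k)$, and you are slightly more explicit about the domain on which the quadratic bound (\ref{sette}) is invoked).
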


\begin{proof}
See \cite{depersis.tac05}. For the convenience of the Readers, we are adding
the proof in the Appendix.
\end{proof}

%\vspace{-.5cm}

The following observation is useful later on.
As a consequence of the mean value theorem, it is not difficult to realize
(see also \cite{depersis.tac05})
that, if
$||z(\cdot)||_\infty\le Z$, for some $Z>0$, then $e$ and $p$ in
(\ref{sei}) (with $i=1$) obey the equations\footnote{Again, we adopt the
symbol $\Lambda$ rather than $\Lambda_1$.}
\be\label{ep}
\ba{rcll}
\dot{e}(r)&=& A(r)e(r)\\
\dot{p}(r)&=& {\bf 0}_{n} & r\ne\rho_k\\
e(r^+)&=& e(r)
+4^{-1}
{\rm diag}[{\rm sgn}(-e(r))]
p(r)  \\
p(r^+)&=&\Lambda p(r) & r=\rho_k\;,
\ea
\ee
with
\be\label{ep0}
A(r):=
\left[\ba{cccccc}
0 & a_{12}(r) & a_{13}(r) & \ldots & a_{1\,n-1}(r) & a_{1n}(r)\\
0 & 0 & a_{23}(r) & \ldots & a_{2\,n-1}(r) & a_{2n}(r)\\
\vdots &\vdots &\vdots &\vdots &\vdots &\vdots \\
0 & 0 & 0 & \ldots & 0 & a_{n-1\,n}(r)\\
0 & 0 & 0 & \ldots & 0 & 0
\ea\right]\;,
\ee
and where the off-diagonal components of $A$, rather than as
functions of $(r,e(r),z(r-\tau))$, are viewed as bounded (unknown)
functions of
$r$,  whose absolute value can be assumed without
loss of generality to be upper bounded
by a positive constant depending on $Z$, $\bar \ell$ and
$T_M$. Now,
concisely rewrite the  system (\ref{ep})
as (\cite{bainov.simeonov.book89})
\be\label{epsilon}\ba{rcll}
\dot\epsilon(r)&=& B(r)\epsilon(r) & r\ne \rho_k\\
\epsilon(r^+)&=& g_k(\epsilon(r)) & r= \rho_k\;,
\ea\ee
with $\epsilon=(e,p)$, $|g_k(\epsilon)|\ge |\epsilon|/2$. The
following straightforward result shows that for the system above an
exponential Lyapunov function exists (this fact was not pointed out
in \cite{depersis.tac05}, where the proofs were not Lyapunov-based):
\begin{corollary}\label{cor}
There exists a function $V(r,\epsilon)=V(r,e,p)\,:\,\R_+\times
\R^{n}\times \R^{n}\to \R_+$ such that, for all $r\in \R_+$ and for all
$\epsilon=(e,p) \in
\R^{n}\times \R^{n}$ for which $|e|\le |p|/2$, satisfies
\[
\ba{rl}
c_1|\epsilon|^2\le V(r,\epsilon)\le c_2|\epsilon|^2 & \\[2mm]
\dst\frac{\partial V}{\partial r}+ \dst\frac{\partial V}{\partial
\epsilon}B(r)\epsilon\le -c_3|\epsilon|^2 & r\ne \rho_k\\[2mm]
V(r^+,g_k(\epsilon))\le V(r,\epsilon) & r= \rho_k\\[2mm]
\left|
\dst\frac{\partial V(r,\epsilon)}{\partial \epsilon}
\right|\le c_4|\epsilon|\;, &
\ea
\]
for some positive constants $c_i$, $i=1,\ldots, 4$.
\end{corollary}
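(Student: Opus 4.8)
The plan is to exploit a structural feature of (\ref{epsilon}) that makes the statement almost immediate: the $p$-component is completely decoupled from $e$. Indeed, reading off (\ref{ep}), the variable $p$ is constant during flow ($\dot p=\mathbf{0}_n$) and is multiplied by the Schur-stable matrix $\Lambda$ at each impulse time $\rho_k$, irrespective of $e$; and by (\ref{tts})--(\ref{tau}) the inter-impulse intervals $\rho_{k+1}-\rho_k=(t_{k+1}-t_k)/\kappa$ are bounded above by $\bar\Delta:=T_M/\kappa$. Hence it suffices to build a Lyapunov function for the autonomous, Schur-stable impulsive linear system governing $p$ alone, and then transfer the resulting estimates to $\epsilon=(e,p)$ using that on the set $\{|e|\le|p|/2\}$ — the region singled out by Lemma~\ref{mcs} — one has $|e|^2\le\tfrac14|p|^2$, so that $\tfrac45|\epsilon|^2\le|p|^2\le|\epsilon|^2$.

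Concretely, I would proceed as follows. Since $\Lambda$ is Schur stable, pick (by the discrete-time Lyapunov lemma) a symmetric $Q_0\succ0$ and a constant $\gamma\in(0,1)$ with $\Lambda^TQ_0\Lambda\preceq\gamma^2Q_0$, and then choose $\mu>0$ small enough that $e^{2\mu\bar\Delta}\gamma^2\le1$. For $r$ in an inter-impulse interval $(\rho_k,\rho_{k+1}]$ set $Q(r):=e^{2\mu(\rho_{k+1}-r)}Q_0$ (and analogously on $[0,\rho_0]$); this is positive definite, sandwiched between $Q_0$ and $e^{2\mu\bar\Delta}Q_0$, and satisfies $\dot Q(r)=-2\mu Q(r)$ on each interval. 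Define $V(r,\epsilon)=V(r,e,p):=p^TQ(r)p$. The four claimed properties are then routine to check: the sandwich bound follows from $\lambda_{\min}(Q_0)|p|^2\le V\le e^{2\mu\bar\Delta}\lambda_{\max}(Q_0)|p|^2$ together with $\tfrac45|\epsilon|^2\le|p|^2\le|\epsilon|^2$; since $\partial V/\partial e=0$ and $\dot p=\mathbf{0}_n$ along the flow, $\partial V/\partial r+(\partial V/\partial\epsilon)B(r)\epsilon=p^T\dot Q(r)p=-2\mu V\le-c_3|\epsilon|^2$; at an impulse $r=\rho_k$, $V(r^+,g_k(\epsilon))=(\Lambda p)^TQ(r^+)(\Lambda p)=e^{2\mu(\rho_{k+1}-\rho_k)}(\Lambda p)^TQ_0(\Lambda p)\le e^{2\mu\bar\Delta}\gamma^2\,p^TQ_0p\le p^TQ_0p=V(r,\epsilon)$ by the choice of $\mu$; and $|\partial V/\partial\epsilon|=|2Q(r)p|\le 2e^{2\mu\bar\Delta}\lambda_{\max}(Q_0)|\epsilon|$.

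The only delicate point I anticipate is the impulse inequality, which is what dictates the choice of $\mu$ in terms of $\gamma$ (i.e.\ the spectral radius of $\Lambda$) and of the \emph{upper} bound $\bar\Delta=T_M/\kappa$ on the dwell time; crucially, no \emph{lower} bound on the dwell time is used, so the constants $c_1,\dots,c_4$ come out uniform over all admissible transmission sequences satisfying (\ref{tts}). (One should also fix a convention for $Q(\cdot)$ on $[0,\rho_0]$ and for the one-sided limits at impulse instants, but this is bookkeeping.) Note that the explicit quadratic construction above never uses the bound $|g_k(\epsilon)|\ge|\epsilon|/2$; that bound would instead be the crucial ingredient if one preferred the converse-Lyapunov form $V(r,\epsilon)=\sup_{\tau\ge0}e^{2\mu\tau}|\phi(r+\tau;r,\epsilon)|^2$ built from the worst-case future trajectory $\phi$ of (\ref{epsilon}), where it secures the lower bound $V\ge c_1|\epsilon|^2$ at impulse times — but since that $V$ is only locally Lipschitz, the quadratic form is preferable for the differential conditions stated in the corollary.
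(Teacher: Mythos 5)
Your construction is correct, but it takes a genuinely different route from the paper's. The paper argues trajectory-wise and then appeals to a converse theorem: from Lemma \ref{mcs} it gets $|e(r)|^2+|p(r)|^2\le \tfrac{5}{4}|p(\rho_k^+)|^2$ on each inter-impulse interval, uses Schur stability of $\Lambda$ to bound $|p(\rho_k^+)|\le\mu\hat\lambda^{k+1}|p(\rho_0)|\le\mu e^{-\lambda(r-\rho_0)}|p(\rho_0)|$, concludes exponential stability of $\epsilon$ on the invariant set $\{|e|\le|p|/2\}$, and then invokes a converse Lyapunov theorem for impulsive systems (Theorem 15.2 of Bainov--Simeonov) to produce $V$ with the four listed properties. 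You instead exhibit $V(r,\epsilon)=p^TQ(r)p$ explicitly, exploiting exactly the same two structural facts (the $p$-subsystem is autonomous and contracts at jumps; $|e|$ is slaved to $|p|$ on the relevant set via $\tfrac45|\epsilon|^2\le|p|^2\le|\epsilon|^2$) but bypassing the converse theorem; your verification of the four inequalities is sound, including the only delicate one, the jump inequality, which fixes $\mu$ via $e^{2\mu\bar\Delta}\gamma^2\le1$. What each approach buys: your quadratic $V$ comes with computable constants $c_1,\dots,c_4$, which matters downstream in Lemma \ref{eci} where $\lambda\ge[1+8(q^2a^2+1)]/c_3$ and $a_3$ enter the final rate, and it avoids having to check that the cited converse theorem really delivers a $C^1$ function with the gradient bound $|\partial V/\partial\epsilon|\le c_4|\epsilon|$; the paper's route is shorter and does not rely on the cascade structure of $B(r)$ and $g_k$. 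Two remarks worth adding to your write-up: (a) your $V$ vanishes on $\{p=0,\,e\ne 0\}$, so all four inequalities genuinely require the restriction $|e|\le|p|/2$ -- this is precisely the scope of the corollary, and Lemma \ref{mcs} makes that set forward invariant along (\ref{epsilon}), so nothing is lost, but it should be said; (b) your observation that only the \emph{upper} dwell-time bound $T_M/\kappa$ enters your constants is correct and is in fact the same quantity that governs the conversion of the per-jump contraction $\hat\lambda^{k+1}$ into a continuous-time rate in the paper's argument, so the two proofs consume the same hypotheses.
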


\begin{proof}
By Lemma \ref{mcs}, for any $r$,
$|e(r)|^2+|p(r)|^2\le 5 |p(\rho_k^+)|^2/4$,
where $k$ is the index such that $r\in [\rho_k, \rho_{k+1})$. Since
the matrix
$\Lambda$ is Schur stable, there exist positive constants $\mu$ and
$\hat\lambda<1$ such that
\[\ba{rcl}
|p(\rho_k^+)|&\le& \mu \hat\lambda^{k+1} |p(\rho_0)|
\le   \mu \exp(-\lambda(r-\rho_0)) |p(\rho_0)|
\ea\]
with $\lambda=|\ln \hat\lambda|/T_m$. We conclude that
\[
\left|\epsilon(r)
\right|\le \sqrt{5/4}\mu  \exp(-\lambda(r-\rho_0)) |p(\rho_0)|
\le \sqrt{5/4}\mu  \exp(-\lambda(r-\rho_0))
\left|\epsilon(\rho_0)
\right|
\;,
\]
that is the solution is exponentially stable. Then,
we can apply a standard converse Lyapunov theorem for
impulsive systems, such as
Theorem 15.2 in \cite{bainov.simeonov.book89}, and infer the thesis.
\end{proof}

%\vspace{-0.75cm}

\noindent The next statement, based on  Lemma 10 in \cite{mazenc.et.al.tac04},
shows that a controller exists which
guarantees the boundedness of the state variables, a property
required in the latter result. For the proof, observe
that the arguments of \cite{mazenc.et.al.tac04}
hold  even in the presence of a ``measurement"
disturbance $e$ induced by the quantization, which can be possibly
large during the transient but it  is decaying to
zero asymptotically.
%The lemma is needed to prove that, at each
%step, the state of the subsystem eventually converges to zero if so
%does the encoding error.

\begin{lemma}\label{l3}
Consider the system
\[
\dot{Z}(r)= -\varepsilon\sigma\left[
\dst\frac{1}{\varepsilon}(Z(r-\tau)+e(r)+\lambda(r))
\right]+\mu(r)
\]
where $Z\in \R$, $\varepsilon$ is a positive real number, and
additionally:
\begin{itemize}
\item $\lambda(\cdot)$ and $\mu(\cdot)$ are continuous functions for
which positive real numbers $\lambda_\ast$ and $\mu_\ast$ exist
such that, respectively,
$|\lambda(r)|\le \lambda_\ast$, $|\mu(r)|\le \mu_\ast$, for all
$r\ge r_0$.
\item $e(\cdot)$ is a piecewise-continuous function for which a positive
time $r_\ast$ and a positive number $e_\ast$ exist such that
$|e(r)|\le e_\ast$, for all $r\ge
r_\ast$.
\end{itemize}
If
\[\ba{l}
\tau \in
\left(0, \dst\frac{1}{12}\right]\;,\quad
\lambda_\ast \in
\left(
0, \dst\frac{\varepsilon}{30}
\right]\;,%\\[4mm]
e_\ast \in
\left(
0, \dst\frac{\varepsilon}{30}
\right]\;,
\quad \mu_\ast \in
\left(
0, \dst\frac{\varepsilon}{30}
\right]\;,
\ea\]
then there exist positive real numbers $Z_\ast$ and $R\ge 0$
such that $||Z(\cdot)||_\infty \le Z_\ast$, and for all $r\ge R$,
\[
|Z(r)|\le 4(\lambda_\ast+\mu_\ast+e_\ast)\;.
\]
\end{lemma}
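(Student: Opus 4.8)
\emph{Proof plan.} The plan is to follow the pattern of Lemma~10 in \cite{mazenc.et.al.tac04}: first I would derive a uniform \emph{a priori} bound on $Z(\cdot)$ from the saturation structure, and then extract from it an asymptotic gain estimate. The only feature not present in \cite{mazenc.et.al.tac04} is the ``measurement'' disturbance $e(\cdot)$, which enters additively inside the saturation; this causes no real difficulty, since $e$ is bounded on every compact time interval (being piecewise continuous) and by $e_\ast$ for $r\ge r_\ast$.

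\emph{Step 1 (boundedness).} Because $|\sigma|\le 1$, one has $|\dot Z(r)|\le\varepsilon+\mu_\ast\le\frac{31}{30}\varepsilon$ for all $r$, so $Z$ is bounded on the compact interval $[r_0-\tau,r_\ast]$, say by $Z_0$. For $r\ge r_\ast$ I would use the following sign property: if $Z(r-\tau)\ge\frac{19}{20}\varepsilon+\lambda_\ast+e_\ast$, then the argument of $\sigma$ in the equation is $\ge\frac{19}{20}$, hence $\sigma[\cdot]\ge\sigma(\tfrac{19}{20})=\tfrac{19}{20}$ and $\dot Z(r)\le-\tfrac{19}{20}\varepsilon+\mu_\ast<0$ (with the mirror statement on the negative side, by oddness). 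Consequently $Z$ can overshoot the level $\frac{19}{20}\varepsilon+\lambda_\ast+e_\ast$ only by what it accumulates during a window of length $\le\tau$ before $Z(\cdot-\tau)$ also reaches that level, i.e.\ by at most $\frac{31}{30}\varepsilon\tau\le\frac{31}{360}\varepsilon$. This yields $\|Z(\cdot)\|_\infty\le Z_\ast:=\max\{Z_0,\ \tfrac{19}{20}\varepsilon+\lambda_\ast+e_\ast\}+\tfrac{31}{360}\varepsilon$.

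\emph{Step 2 (asymptotic bound).} Set $D:=\lambda_\ast+\mu_\ast+e_\ast$ and $M:=\limsup_{r\to\infty}|Z(r)|\le Z_\ast$; fix $\eta>0$, so that eventually $|Z(r)|\le M+\eta$. I would combine two observations. First, since $\sigma$ is odd with $0\le\sigma'\le1$ one has $|\sigma(s)|\le|s|$, hence $|\dot Z(s)|\le|Z(s-\tau)|+\lambda_\ast+e_\ast+\mu_\ast=|Z(s-\tau)|+D$ for $s\ge r_\ast$. Second, at any large instant $\bar r$ at which $|Z|$ has a local maximum one has $\dot Z(\bar r)=0$, so $\varepsilon\,\sigma[\tfrac1\varepsilon(Z(\bar r-\tau)+e(\bar r)+\lambda(\bar r))]=\mu(\bar r)$; since $|\mu(\bar r)|/\varepsilon\le\mu_\ast/\varepsilon<\frac{19}{20}$ and $\sigma$ is the identity on $[-\tfrac{19}{20},\tfrac{19}{20}]$, this forces $Z(\bar r-\tau)=\mu(\bar r)-e(\bar r)-\lambda(\bar r)$, i.e.\ $|Z(\bar r-\tau)|\le D$. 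Therefore, at such a local maximum,
\[
|Z(\bar r)|\le|Z(\bar r-\tau)|+\int_{\bar r-\tau}^{\bar r}|\dot Z(s)|\,ds\le D+\tau\,(M+\eta+D).
\]
Letting $\bar r$ run along a sequence of local maxima of $|Z|$ realizing $M$, and then $\eta\to0$, gives $M\le D+\tau(M+D)$, i.e.\ $M\le\frac{1+\tau}{1-\tau}D\le\frac{13}{11}D$ using $\tau\le\frac1{12}$. Choosing $\eta$ with $\frac{13}{11}D+\eta\le4D$ then furnishes $R\ge0$ such that $|Z(r)|\le4D=4(\lambda_\ast+\mu_\ast+e_\ast)$ for all $r\ge R$, as claimed.

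\emph{Main obstacle.} The genuine work is the delay bookkeeping in Step~2: bounding $\dot Z$ on $[\bar r-\tau,\bar r]$ requires control of $Z$ two delay-widths into the past (this is what the two-delay-width norm $\|x_r\|$ is for), and one must check that the tested local maxima eventually lie beyond $r_\ast$ and within the region where $M+\eta$ controls $Z(\cdot-\tau)$. A separate, routine case must also be dispatched when $Z$ is eventually monotone and has no such local maxima: there $\dot Z\to0$ directly forces $|Z(r-\tau)|\le D+o(1)$. The hypotheses $\tau\le1/12$ and $\lambda_\ast,\mu_\ast,e_\ast\le\varepsilon/30$ are exactly what make the ``$\sigma$ saturates'' and ``$\sigma$ is the identity'' alternatives in Steps~1 and~2 close with room to spare, and the generous constant $4$ in the conclusion absorbs the delay overshoot and the $\eta$-slack.
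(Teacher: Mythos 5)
The paper does not actually prove this lemma: its ``proof'' is the single line ``See \cite{mazenc.et.al.tac04}'', together with the remark preceding the statement that the arguments of Lemma 10 there survive the additional disturbance $e$. So there is nothing in the paper to compare you against line by line; judged on its own terms, your reconstruction follows the same route as the cited lemma and is essentially correct. Both key estimates are right: the saturation-sign argument with threshold $\frac{19}{20}\varepsilon+\lambda_\ast+e_\ast$ and overshoot $\frac{31}{30}\varepsilon\tau$ gives the uniform bound $Z_\ast$; and the combination of $|\dot Z(s)|\le|Z(s-\tau)|+D$ with the linear-region identity at near-extremal times gives $M\le D+\tau(M+D)$, hence $M\le\frac{13}{11}D<4D$ for $\tau\le\frac{1}{12}$. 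You also correctly observe that $e$ enters exactly like $\lambda$ inside the saturation, which is the only adaptation this setting requires over the cited reference.

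Two places need tightening, both of which you partially flag. (a) The dichotomy ``local maxima of $|Z|$ realize $M$, or $Z$ is eventually monotone'' is not exhaustive as stated. The clean split is: if $\liminf|Z|<\limsup|Z|=M$, interleave arbitrarily large times where $|Z|$ is near the liminf with times where it is near $M$; the maximum of $|Z|$ over each resulting compact window is attained in the interior, so genuine local maxima with values tending to $M$ do exist and your estimate applies. If instead $|Z(r)|\to M$, then (for $M>0$) $Z$ eventually has constant sign, and if $M>D$ the argument of $\sigma$ is eventually bounded away from $[-\mu_\ast/\varepsilon,\mu_\ast/\varepsilon]$ on the side that makes ${\rm sgn}(Z)\,\dot Z\le-\delta<0$, contradicting convergence to a finite positive limit; hence $M\le D$ in that case. (Alternatively, a single application of the fluctuation lemma --- $t_j\to\infty$ with $|Z(t_j)|\to M$ and $\dot Z(t_j)\to0$ --- covers both cases at once.) (b) Since $e$ is only piecewise continuous, $\dot Z$ need not vanish at a local maximum of $|Z|$; but the one-sided condition $\dot Z(\bar r^-)\ge 0$ (when $Z(\bar r)>0$) already forces $\sigma[\cdot]\le\mu_\ast/\varepsilon$, hence the one-sided bound $Z(\bar r-\tau)\le D$, which is all your subsequent chain of inequalities actually uses. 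With these two routine repairs the proof is complete.
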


\begin{proof}
See \cite{mazenc.et.al.tac04}.
\end{proof}
%
%\begin{remark}
%Compared with those in \cite{mazenc.et.al.tac04}, the conditions on
%$\lambda_\ast, \mu_\ast$ are stronger. This is due to the
%presence of the additional disturbance $e$.
%$\triangleleft$
%\end{remark}

To illustrate the iterative analysis in a concise manner, the
following is very useful (cf.~the analogous inductive
hypothesis in \cite{mazenc.et.al.tac04}):

\medskip

\noindent {\it Inductive Hypothesis} There exists $\bar{\rm Z}_i>0$
such that
$||Z_i(\cdot)||\le \bar{\rm Z}_i$. Moreover, for each $j=i,i+1, \ldots,n$,
$|e_j(r)|\le p_{j}(r)/2$, for all
$r\ge \rho_0$, and there exists $R_i>\tau$ such that
for all
$r\ge R_i$,
\[\ba{l}
|z_j(r)|\le \dst\frac{1}{4}\varepsilon_j\;,\quad
|e_j(r)|\le \dst\frac{1}{2n}\cdot \dst\frac{1}{80^{j-i+2}} \varepsilon_{j}
\;.
\ea\]

\medskip

\noindent {\it Initial step} ($i=n$) The initial step is trivially true, provided that
$\tau\le 1/12$, and $\varepsilon_n=1/30$.
%Indeed, consider the closed-loop system (\ref{sei}), (\ref{v.r}) with $i=n$,
%to obtain:
%\be\label{isn}
%\ba{rcll}
%\dot{z}_{n}(r)&=& -\sigma_n(z_{n}(r-\tau)+e_n(r)+\hat\lambda_{n-1}(r))\\
%%\dot{z}_{dn}(r)&=& -\sigma_n(z_{dn}(r-\tau)+\tilde\lambda_{d,n-1}(r-\tau))\\
%\dot{e}_{n}(r)&=& 0\\
%\dot{p}_{n}(r)&=& 0 & r\ne\rho_k\\[2mm]
%z_{n}(r^+)&=& z_{n}(r)\\
%e_{n}(r^+)&=& e_{n}(r)+4^{-1}{\rm sgn}(-e_{n}(r))
%p_{n}(r)\\
%p_{n}(r^+)&=& \Lambda_n p_{n}(r) & r=\rho_k
%\;,
%\ea
%\ee
%where we set $\Lambda_n:=1/2$.
%By Lemma \ref{mcs} and (\ref{isn}),  $|e_{n}(r)|\le
%\varepsilon_n/80$ from a certain time $R_n'$ on.
%Applying Lemma \ref{l3} to the
%$z_n$ sub-system,  we conclude that
%$||z_n(\cdot)||_\infty \le \bar{\rm Z}_n$, and
%there exists a time $R_n>R_n'$ such that $|z_n(r)|\le \varepsilon_n/4$, and
%$|e_n(r)|\le \varepsilon_{n-1}/(n\cdot
%160)$ for all $r\ge R_n$, the latter
%again by Lemma \ref{mcs}.

\medskip

\noindent {\it Inductive step} The inductive step is summarized in
the following result:

\begin{lemma}\label{isl}
Let
\be\label{ef}
P \le P_m\le [20\cdot(30)^n
n]^{-1}\;,\quad
\tau  \le\tau_m\le  [6\cdot 30^{n+1}n(n+1)]^{-1}\;.
\ee
If the inductive hypothesis is true for some $i\in \{2,\ldots,n\}$,
then it is also true for $i-1$.
\end{lemma}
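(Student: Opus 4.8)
\emph{Proof proposal.} The plan is to recover, with $i-1$ in place of $i$, each clause of the Inductive Hypothesis: the norm bound $||Z_{i-1}(\cdot)||\le\bar{\rm Z}_{i-1}$; the estimate $|e_{i-1}(r)|\le p_{i-1}(r)/2$ with $p_{i-1}(r)\to 0$; and the ultimate bounds $|z_{i-1}(r)|\le\varepsilon_{i-1}/4$ and $|e_{i-1}(r)|\le (2n)^{-1}80^{-2}\varepsilon_{i-1}$ for $r\ge R_{i-1}$. The $e$-clauses come first, and essentially for free, from Lemma \ref{mcs} applied with index $i-1$: its hypotheses --- $||Z_i(\cdot)||_\infty\le\bar{\rm Z}_i$, $|e_j(r)|\le p_j(r)/2$ for $j\ge i$, and $\Lambda_i$ Schur --- are exactly what the hypothesis for $i$ supplies, so Lemma \ref{mcs} yields $|e_{i-1}(r)|\le p_{i-1}(r)/2$ for all $r\ge\rho_0$ with $(p_{i-1},P_i)$ obeying the triangular recursion (\ref{ai}), whose matrix carries the contracting scalar $1/2$ on the diagonal together with the Schur block $\Lambda_i$; hence $p_{i-1}(r)\to 0$ geometrically. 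Consequently $|e_{i-1}(r)|$, and likewise each $|e_j(r)|$ for $j\ge i$, eventually drops below any prescribed threshold, which disposes of the new $e_{i-1}$-estimate and of the slightly sharper $e_j$-estimates (the exponent $80^{j-i+3}$ against $80^{j-i+2}$) demanded by the hypothesis for $i-1$: everything $e$-related decays to zero, so the bookkeeping exponents only need room.

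The heart of the argument is the reduction of the $z_{i-1}$-subsystem to the scalar delay equation of Lemma \ref{l3}. From (\ref{sei}) the first component of $\dot Z_{i-1}$ is $\dot z_{i-1}(r)=\sum_{j=i}^{n}z_j(r)+v(r)+f_{i-1}(Z_i(r))$, and $z_{i-1}$ undergoes no reset, so this is a genuine delay differential equation. For $r\ge R_i+\tau$ I would check that the outer saturations $\sigma_i,\ldots,\sigma_n$ in the controller (\ref{v.r}) all act as the identity: in the argument of $\sigma_j$ one has $|z_j(r-\tau)|\le\varepsilon_j/4$, $|e_j(r)|$ negligible by the hypothesis for $i$, and the nested tail $\sigma_{j-1}(\cdots)$ bounded by $\varepsilon_{j-1}=\varepsilon_j/30$ via (\ref{varepsilon}), so that argument has modulus below $\tfrac{19}{20}\varepsilon_j$. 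Peeling off these saturations turns $v(r)$ into $-\sum_{j=i}^{n}(e_j(r)+z_j(r-\tau))-\sigma_{i-1}(z_{i-1}(r-\tau)+e_{i-1}(r)+\hat\lambda_{i-2}(r))$, and the open-loop terms $\sum_{j=i}^n z_j(r)$ cancel the shifted copies $\sum_{j=i}^n z_j(r-\tau)$ up to increments. One is left with
\[
\dot z_{i-1}(r)=-\sigma_{i-1}(z_{i-1}(r-\tau)+e_{i-1}(r)+\hat\lambda_{i-2}(r))+\mu(r),\qquad \mu(r):=f_{i-1}(Z_i(r))+\sum_{j=i}^{n}(z_j(r)-z_j(r-\tau)-e_j(r)),
\]
which is precisely the equation treated by Lemma \ref{l3}, with $\varepsilon=\varepsilon_{i-1}$, disturbance $\mu$, measurement error $e_{i-1}$ and exogenous signal $\lambda=\hat\lambda_{i-2}$.

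It then remains to verify the size conditions of Lemma \ref{l3} for $r$ past some time and to translate its conclusion. We have $\tau\le\tau_m<1/12$; $|\hat\lambda_{i-2}(r)|\le\varepsilon_{i-2}=\varepsilon_{i-1}/30$, so $\lambda_\ast=\varepsilon_{i-1}/30$; $|e_{i-1}(r)|\le p_{i-1}(r)/2$ is eventually as small as we wish, so $e_\ast$ can be taken below $\varepsilon_{i-1}/30$; and $|\mu(r)|$ is bounded by combining $|f_{i-1}(Z_i(r))|\le P|Z_i(r)|^2$ --- legitimate since $|Z_i(r)|_\infty\le\varepsilon_i/4<(M\kappa)/(L(n+1)!)$, cf.~(\ref{sette}) and (\ref{varepsilon}) --- with $P\le P_m$, together with $|z_j(r)-z_j(r-\tau)|\le\tau\sup|\dot z_j|$ where $|\dot z_j|$ is bounded by a dimension-dependent constant ($|v|\le\varepsilon_n$, $|z_k|\le\varepsilon_k/4$), and with $\sum_{j\ge i}|e_j(r)|$ negligible by the hypothesis for $i$. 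The explicit bounds $P\le P_m\le[20(30)^n n]^{-1}$ and $\tau\le\tau_m\le[6\cdot 30^{n+1}n(n+1)]^{-1}$ in (\ref{ef}) are tailored precisely so that $\mu_\ast$ lies below $\varepsilon_{i-1}/30$ and, crucially, so that $\lambda_\ast+\mu_\ast+e_\ast\le\varepsilon_{i-1}/16$; Lemma \ref{l3} then yields a finite $Z_\ast$ with $||z_{i-1}(\cdot)||_\infty\le Z_\ast$ and an $R'$ with $|z_{i-1}(r)|\le 4(\lambda_\ast+\mu_\ast+e_\ast)\le\varepsilon_{i-1}/4$ for $r\ge R'$. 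Taking $R_{i-1}$ to be the largest of $R_i+\tau$, $R'$ and the times past which the various $e$-estimates hold, and $\bar{\rm Z}_{i-1}:=(Z_\ast^2+\bar{\rm Z}_i^2)^{1/2}$, recovers the Inductive Hypothesis for $i-1$.

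The hard part will be this last, quantitative step: meeting simultaneously the hypotheses $\tau,\lambda_\ast,e_\ast,\mu_\ast\le\varepsilon_{i-1}/30$ of Lemma \ref{l3} and the sharper closing requirement $4(\lambda_\ast+\mu_\ast+e_\ast)\le\varepsilon_{i-1}/4$, uniformly in the step index $i\in\{2,\ldots,n\}$ --- which is exactly what forces the constants in (\ref{ef}) and, upstream, the scalings of $\kappa$, $L$ and the $\varepsilon_i$ in (\ref{varepsilon}). A secondary point to flag is that $\hat\lambda_{i-2}$ is only piecewise continuous, since it contains the reset-affected errors $e_1,\ldots,e_{i-2}$; one should remark that this leaves Lemma \ref{l3} applicable, its proof resting on the attractivity of a sublevel set and tolerating a bounded, piecewise-continuous exogenous term.
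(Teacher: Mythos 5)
Your proposal follows essentially the same route as the paper: peel off the outer saturations $\sigma_n,\ldots,\sigma_i$ using the hypothesis for $i$, reduce $\dot z_{i-1}$ to the scalar delayed equation of Lemma \ref{l3} with perturbation $\mu=f_{i-1}(Z_i)+\sum_{j\ge i}(z_j(r)-z_j(r-\tau)-e_j(r))$ bounded via $\tau\sup|\dot z_j|$ and $P\le P_m$, invoke Lemma \ref{mcs} for $e_{i-1}$, and close with $4(\lambda_\ast+\mu_\ast+e_\ast)\le\varepsilon_{i-1}/4$. One small caution: taking $e_\ast$ merely ``below $\varepsilon_{i-1}/30$'' is not enough for $\lambda_\ast+\mu_\ast+e_\ast\le\varepsilon_{i-1}/16$ (already $\varepsilon_{i-1}/30+\varepsilon_{i-1}/30>\varepsilon_{i-1}/16$); you need the sharper eventual bound $e_\ast\le\varepsilon_{i-2}/(160n)$ that the geometric decay of $p_{i-1}$ supplies and that the paper uses explicitly, but since you note $e_\ast$ can be made arbitrarily small this is a matter of bookkeeping rather than a gap.
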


\begin{proof}
The proof only slightly differs from an analogous one in
\cite{mazenc.et.al.tac04} in that we have to deal with the
presence of the quantization errors. The arguments are as follows.
If  the induction hypothesis is satisfied
for some $i\in \{2,\ldots,n\}$, then the control law
$v(r)$ in (\ref{v.r})
becomes equal to
\[\ba{l}
-w_{n}(r)-\ldots -w_{i}(r)-\sigma_{i-1}(w_{i-1}(r)+\hat
\lambda_{i-2}(r))=\\
-\dst\sum_{j=i}^n [e_j(r)+z_j(r-\tau)]-\sigma_{i-1}(w_{i-1}(r)+\hat
\lambda_{i-2}(r))\;,
\ea\]
provided that $|e_j(r)+z_j(r-\tau)|\le 11 \varepsilon_j/12$ for each
$j=i,i+1,\ldots, n$, which is actually the case by hypothesis
for all $r\ge R_i+\tau$.
As a consequence, the components from $i-1$ to $n$ of the system in the variable $z$
writes as
\be\label{wa}
\hspace{-0.25cm}
\ba{rcl}
\dot{z}_{i-1}(r) &=& -\sigma_{i-1}(z_{i-1}(r-\tau)+e_{i-1}(r)+\hat
\lambda_{i-2}(r))+[z_i(r)-z_{i}(r-\tau)+\ldots+z_n(r)\\
&& -z_{n}(r-\tau)]-[e_{i}(r)+\ldots+e_n(r)]+f_{i-1}(Z_i(r))\\[0mm]
\dot{z}_{\ell}(r) &=& \sum_{j=\ell+1}^n z_j(r)-\sum_{j=i}^n e_j(r)-
\sum_{j=i}^n z_j(r-\tau)-\\
&& \sigma_{i-1}(z_{i-1}(r-\tau)+e_{i-1}(r)+\hat
\lambda_{i-2}(r))+f_\ell(Z_{l+1}(r))\hfill \ell=i,\ldots, n-1
\\[2mm]
\dot{z}_{n}(r) &=& -\sum_{j=i}^n e_j(r)-
\sum_{j=i}^n z_j(r-\tau) -\sigma_{i-1}(z_{i-1}(r-\tau)+e_{i-1}(r)+\hat \lambda_{i-2}(r))
\ea\ee
As it is noted in \cite{mazenc.et.al.tac04}, from
the expression of $\dot{z}_\ell(r)$, with
$\ell=i,i+1,\ldots,n$, one proves that
$|\dot{z}_\ell(r)|\le
(n+1)/4+P_m n/16$,
for $|f_\ell(Z_{\ell+1}(r))|\le P_m n/16$.
Hence, for each $\ell=i,i+1,\ldots,n$,
\[
|z_{\ell}(r)-z_{\ell}(r-\tau)|\le |\dst\int_{r-\tau}^{r}
\dot{z}_\ell(\sigma)d\sigma|
\le \tau\left(\dst\frac{n+1}{4}+\dst\frac{1}{16}P_m n\right)\;,
\]
so that
\[
|[z_i(r)-z_{i}(r-\tau)+\ldots+z_n(r)-z_{n}(r-\tau)]+f_{i-1}(Z_i(r))|
\le n\tau\left(\dst\frac{n+1}{4}+\dst\frac{1}{16}P_m n\right)+\dst\frac{1}{16}P_m
n\;.
\]
The following upper bounds easily follow from (\ref{ef}),
\[
\dst\frac{1}{16}P_m n \le \dst\frac{1}{320}\varepsilon_{i-1}\;,\quad
\tau n\dst\frac{n+1}{4}\;,\;
\tau n\cdot \dst\frac{1}{16}P_m n \le \dst\frac{1}{2}\cdot
\dst\frac{1}{320}\varepsilon_{i-1}\;.
\]
By the inductive hypothesis, we additionally have
$|e_{i}(r)+\ldots+e_n(r)|\le \varepsilon_{i-1}/160$.
The bounds above show that in the $\dot{z}_{i-1}(r)$ equation we have
\[
|[z_i(r)-z_{i}(r-\tau)+\ldots+z_n(r)-z_{n}(r-\tau)]-[e_{i}(r)+\ldots+e_n(r)]+f_{i-1}(Z_i(r))|
\le \varepsilon_{i-1}/80\;.
\]
Furthermore, as a consequence of  Lemma \ref{mcs} and the inductive hypothesis,
$e_{i-1}(r)$ satisfies $|e_{i-1}(r)|\le
p_{{i-1}}(r)/2$, and in finite time becomes smaller
than $\varepsilon_{i-2}/(n\cdot 160)\le \varepsilon_{i-1}/30$.
Hence, it is possible to apply Lemma \ref{l3}, and infer that
$||Z_{i-1}(\cdot)||_\infty \le
Z_{i-1}$ and
there exists a time $R_{i-1}$ such that $|z_{i-1}(r)|\le \varepsilon_{i-1}/4$ for all $r\ge
R_{i-1}$.
\end{proof}

\noindent Applying this lemma repeatedly, one
concludes that, after a finite time, the state
converges to the linear operation region for all the saturation
functions, it remains therein for all the subsequent times, and
hence
the closed-loop system starts evolving according
to the equations
(recall (\ref{epsilon}))
\be\label{ep2}
\ba{rcll}
\dot z(r) &=& \Gamma z(r)+\Delta z(r-\tau)+\Delta e(r)+\varphi(z(r)) &\\
\dot{\epsilon}(r)&=& B(r)\epsilon(r) & r\ne\rho_k\\[0mm]
z(r^+) &=& z(r) & \\
\epsilon(r^+)&=& g_k(\epsilon(r)) & r=\rho_k\;,
\ea
\ee
where:\\
(i) $\Gamma , \Delta $ are matrices for which there exist
$q=(1+n^2)^{n-1}$, $a=n$, and $Q=Q^T>0$ such that
\[
(\Gamma +\Delta )^TQ+Q(\Gamma +\Delta )\le -I\;,
\]
with $||Q||\le q$ and $||\Gamma ||,||\Delta ||\le a$;\\
(ii)
There exists $\gamma>0$ such that $\varphi(z(r)):=[\,f_1(Z_2(r))\;\ldots\;f_{n-1}(Z_{n}(r))\;0\,]^T$
satisfies
\[
|\varphi(z)|\le \gamma
|z|\;;
\]
(iii) $\epsilon=(e,p)$.
%\begin{remark}\label{r6}
%It can be shown that the same arguments used for the proofs of the Lemma
%\ref{mcs} to \ref{isl} lead to the conclusion that there always exists
%a sufficiently small neighborhood of initial conditions for the
%system (\ref{sei}), (\ref{v.r}), with $i=1$, such that the entire state evolves in
%a set where all the saturation functions operate in their linear region.
%This remark is important to
%conclude Lyapunov stability of the closed-loop system.
%\end{remark}

In \cite{mazenc.et.al.tac04} the authors investigate the
stability property of
\[
\dot z(r) = \Gamma z(r)+\Delta z(r-\tau)+\varphi(z(r))\;,
\]
that is the first component of system (\ref{ep2}), with $e=0$ and
no impulses. In the present case, $e$ is due to the quantization
noise and drives the $z$-subsystem. The ``driver" subsystem is
described by
the $\epsilon$ equations of (\ref{ep2}). Hence, we have to study
the stability of a {\em cascade} system with impulses.
The Corollary \ref{cor} points out that there exists an exponential
Lyapunov function for the $\epsilon$ subsystem of (\ref{ep2}).
Based on
this function, one can build  a Lyapunov-Krasowskii functional to
show that the origin is exponentially stable for the  entire cascade impulsive system
(\ref{ep2}), thus
extending Lemma 11 in \cite{mazenc.et.al.tac04} in the following way:
\begin{lemma}\label{eci}
Consider system (\ref{ep2}), for which the conditions (i)-(iii) hold. If
\[
\gamma\le \dst\frac{1}{8q}
\quad \mbox{and}\quad \tau\le
\dst\frac{1}{8a^2(8aq+1)^2}\;,
\]
then, for all $r\ge \rho_0$, for some positive real numbers $\mu,
\delta$, we have
\[
|(z(r),\epsilon(r))|\le \mu||(z,\epsilon)_{\rho_0}||\exp(-\delta
(r-\rho_0))\;.
\]
\end{lemma}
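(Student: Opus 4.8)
The plan is to construct a Lyapunov--Krasowskii functional that combines the quadratic form $z^TQz$ (which, by condition (i), decreases along the nominal delay system $\dot z=\Gamma z+\Delta z(r-\tau)$) with a Krasowskii term that absorbs the delayed state $z(r-\tau)$, and with the exponential Lyapunov function $V(r,\epsilon)$ supplied by Corollary \ref{cor} for the driver subsystem. Concretely, I would take
\[
W(r) = z(r)^TQz(r) + \beta_1\int_{r-\tau}^r\int_s^r |z(\varsigma)|^2\,d\varsigma\,ds + \beta_2\, V(r,\epsilon(r)),
\]
for constants $\beta_1,\beta_2>0$ to be chosen, where the double integral is the standard device that, upon differentiation, yields $\tau|z(r)|^2$ minus an integral of $|z(s)|^2$ over $[r-\tau,r]$ that can be used to dominate the cross terms coming from $z(r)-z(r-\tau)$.

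The key steps, in order: (1) Differentiate $z^TQz$ along the continuous dynamics of (\ref{ep2}); write $\Delta z(r-\tau)=\Delta z(r)-\Delta\int_{r-\tau}^r\dot z(\varsigma)d\varsigma$ so that the ``principal part'' gives $z^T[(\Gamma+\Delta)^TQ+Q(\Gamma+\Delta)]z\le -|z|^2$ by (i), and the remainder produces terms bounded, via $\|Q\|\le q$, $\|\Gamma\|,\|\Delta\|\le a$, $|\varphi(z)|\le\gamma|z|$, and the expression for $\dot z$ inside the integral, by $2q\gamma|z|^2$ plus cross terms involving $|z(r)||e(r)|$ and the integral $\int_{r-\tau}^r|z(\varsigma)|^2 d\varsigma$; here the smallness hypotheses $\gamma\le 1/(8q)$ and $\tau\le 1/(8a^2(8aq+1)^2)$ are exactly what is needed to keep the coefficient of $|z(r)|^2$ strictly negative after Young's inequality is used on every cross term. (2) Differentiate the double-integral term to get $\beta_1\tau|z(r)|^2-\beta_1\int_{r-\tau}^r|z(\varsigma)|^2d\varsigma$ and choose $\beta_1$ so the negative integral kills the $\int_{r-\tau}^r|z(\varsigma)|^2d\varsigma$ generated in step (1). (3) Use Corollary \ref{cor}: $\dot V\le -c_3|\epsilon|^2$ between impulses and $|\partial V/\partial\epsilon|\le c_4|\epsilon|$, so $\beta_2\dot V\le -\beta_2 c_3|\epsilon|^2$ and this negative term dominates the cross terms $|z(r)||e(r)|\le|z(r)||\epsilon(r)|$ left over from step (1), again after a Young's inequality split, upon taking $\beta_2$ large enough; then pick the residual coefficient of $|z|^2$ to still be negative. (4) At the impulse times $r=\rho_k$: $z(r^+)=z(r)$ so $z^TQz$ and the Krasowskii term are unchanged, and $V(r^+,g_k(\epsilon))\le V(r,\epsilon)$ by Corollary \ref{cor}, hence $W$ does not increase across jumps. (5) Combine to get $\dot W\le -c|z(r)|^2-c'|\epsilon(r)|^2\le -c''\,(|z(r)|^2+|\epsilon(r)|^2)$ between impulses and $W(r^+)\le W(r)$ at impulses, while $c_1'(|z|^2)+\beta_2 c_1|\epsilon|^2\le W\le$ (const)$\,\|(z,\epsilon)_r\|^2$ by the bounds on $Q$, $V$, and the double integral; a standard comparison argument for impulsive functional-differential equations then yields $W(r)\le W(\rho_0)e^{-2\delta(r-\rho_0)}$ for suitable $\delta>0$, and unwinding the norm equivalence gives the claimed estimate $|(z(r),\epsilon(r))|\le\mu\|(z,\epsilon)_{\rho_0}\|\exp(-\delta(r-\rho_0))$.

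The main obstacle I anticipate is the bookkeeping in step (1)--(3): one must verify that the particular thresholds $\gamma\le 1/(8q)$ and $\tau\le 1/(8a^2(8aq+1)^2)$ really do leave room, after all the Young's-inequality splittings, for a strictly negative definite bound — i.e. tracking which fraction of $-|z(r)|^2$ is spent dominating the $\varphi$ term, which on the delayed-integral term, and which on the coupling to $\epsilon$, and confirming the leftover is positive. This is essentially the delay-robustness estimate of Lemma 11 in \cite{mazenc.et.al.tac04} re-done with the extra additive input $e$; the novelty is only that $e$ is now the output of the exponentially stable impulsive $\epsilon$-subsystem, so the cascade structure and Corollary \ref{cor} let the coupling term be absorbed rather than assumed small. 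A minor technical point is that $V(r,\epsilon)$ and hence $W$ are only guaranteed to satisfy the Corollary's inequalities on the region $|e|\le|p|/2$, which is exactly where the trajectory lives once the inductive analysis of Lemma \ref{isl} has driven it into the linear regime, so the functional is well-defined along the solutions under consideration.
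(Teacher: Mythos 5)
Your proposal follows essentially the same route as the paper's proof: a functional of the form $z^TQz$ plus Krasowskii double integrals plus a large multiple of the converse Lyapunov function $V(r,\epsilon)$ from Corollary \ref{cor}, non-increase across the jumps (where $z$ is continuous and $V(r^+,g_k(\epsilon))\le V(r,\epsilon)$), and a comparison argument giving exponential decay. The only bookkeeping corrections needed are that the double-integral memory window must be $2\tau$ rather than $\tau$ (since $\dot z(\varsigma)$ for $\varsigma\in[r-\tau,r]$ contains $z(\varsigma-\tau)$, which reaches back to $r-2\tau$) and that a second double-integral term in $|\epsilon|^2$ should be added to absorb the delayed term $\int_{r-\tau}^r|\epsilon(s)|^2\,ds$ produced by the $\Delta e(\varsigma)$ contribution inside the integrated $\dot z$; both are of the kind you already flagged as anticipated bookkeeping.
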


\begin{proof}
Following \cite{mazenc.et.al.tac04}, we notice first that
$V_1(z)=z^TQz$ satisfies, by the requirement on $\gamma$,
\[\ba{rcl}
\dot V_1&\le&
-\dst\frac{1}{2}|z|^2+
\dst\frac{a^2(8aq+1)^2}{4}\tau\dst\int_{r-2\tau}^r |z(s)|^2
ds
+ 8q^2a^4\tau \dst\int_{r-\tau}^r |\epsilon(s)|^2ds
+
8q^2a^2|\epsilon|^2
\;.
\ea\]
The condition on $\tau$ additionally gives
\[\ba{rcl}
\dot V_1&\le&
-\dst\frac{1}{2}|z|^2+
\dst\frac{1}{32}\dst\int_{r-2\tau}^r |z(s)|^2
ds
+ \dst\int_{r-\tau}^r |\epsilon(s)|^2ds
+
8q^2a^2|\epsilon|^2
\;.
\ea\]
The Lyapunov-Krasowskii functional
\[
V_2(r,z,\epsilon)=V_1(r,z)+\dst\frac{1}{16}\dst\int_{r-2\tau}^{r}\dst\int_{s}^{r}
|z(\ell)|^2 d\ell\,d s
+2\dst\int_{r-2\tau}^{r}\dst\int_{s}^{r}
|\epsilon(\ell)|^2 d\ell\,d s\;,
\]
satisfies for $r\ne \rho_k$
\[\ba{rcl}
\dot V_2&\le&
-\dst\frac{1}{2}|z|^2+
\dst\frac{1}{32}\dst\int_{r-2\tau}^r |z(s)|^2
ds
+ \dst\int_{r-2\tau}^r |\epsilon(s)|^2ds
+
8q^2a^2|\epsilon|^2\\[2mm]
&& -\dst\frac{1}{16}\dst\int_{r-2\tau}^{r}|z(\ell)|^2 d\ell
+\dst\frac{1}{8}\tau |z|^2-2\dst\int_{r-2\tau}^{r}|\epsilon(s)|^2 d\ell
+4\tau |\epsilon|^2
\;,
\ea\]
that is
\[\ba{rcl}
\dot V_2 &\le&
-\dst\frac{1}{4}|z|^2-
\dst\frac{1}{32}\dst\int_{r-2\tau}^r |z(s)|^2
ds
- \dst\int_{r-2\tau}^r |\epsilon(s)|^2ds
+8(q^2a^2+1)|\epsilon|^2
\;,
\ea\]
since $\tau\le 2$. Now, recalling that $|e(r)|\le |p(r)|/2$ for
all $r$ as a consequence of Lemma \ref{mcs}, and choosing the function
$V_3(r,z,\epsilon)=V_2(r,z,\epsilon)+\lambda V(r,\epsilon)$ (see also
\cite{mazenc.bliman.tac06} for a similar choice),
with  $\lambda \ge [1+8(q^2a^2+1)]/c_3$,
it is readily proven that
\be\label{lkfd}\ba{rcl}
\dot V_3 &\le&
-\dst\frac{1}{4}|z|^2-|\epsilon|^2 -
\dst\frac{1}{32}\dst\int_{r-2\tau}^r |z(s)|^2
ds
- \dst\int_{r-2\tau}^r |\epsilon(s)|^2ds
\;,
\ea\ee
for all $r\ne \rho_k$.
The inequality
$V_3(r^+,z,g_k(\epsilon))\le  V_3(r,z,\epsilon)$, for $r= \rho_k$,
also descends straightforwardly
(cf.~\cite{liu.mcm04}). Hence, we can conclude that the impulsive delay system
is exponentially
stable. Indeed,
$a_1 |(z,\epsilon)|^2\le V_3(r,z,\epsilon)\le a_2
||(z,\epsilon)_r||^2$,
with $a_1=\lambda_m(Q)+\lambda c_1$, $a_2=\lambda_M(Q)+3\tau^2/4+\lambda c_2$.
Moreover,
\[
\dot V_3\le
- a_3\left[
\lambda_M(Q)|z|^2 + \lambda c_2 |\epsilon|^2+\dst\frac{\tau}{8}
\dst\int_{r-2\tau}^r |z(s)|^2
ds+2\tau \dst\int_{r-2\tau}^r |\epsilon(s)|^2ds
\right]
\le -a_3
 V_3\;,
\]
with
$a_3^{-1}=\max\left\{4\lambda_M(Q), \lambda c_2,
4\tau\right\}$.
Over the interval $[\rho_k, \rho_{k+1})$, we have
(\cite{halanay.book})
\[
V_3(\rho_{k+1}^+,z,g_{k+1}(\epsilon))\le V_3(\rho_{k+1},z,\epsilon)\le
\exp(-a_3 (\rho_{k+1}-\rho_k)) V_3(\rho_{k}^+,z,g_k(\epsilon))\;.
\]
A simple iterative arguments yields that for all $r\ge \rho_0$,
\[
a_1 |(z(r),\epsilon(r))|^2\le V_3(r,z,\epsilon)\le \exp(-a_3 (r-\rho_0))
V_3(\rho_0,z(\rho_0),\epsilon(\rho_0))\le a_2 ||(z,\epsilon)_{\rho_0}||^2  \exp(-a_3 (r-\rho_0))\;,
\]
that is
$|(z(r),\epsilon(r))|\le \sqrt{a_2/a_1}||(z,\epsilon)_{\rho_0}||\exp(-a_3
(r-\rho_0)/2)$,
which shows exponential stability of the impulsive delay system.
\end{proof}

\noindent We can now state the following stability result for the
system (\ref{sei}), (\ref{v.r}):
\begin{proposition}\label{p1}
If the saturation levels $\varepsilon_i$ are as in (\ref{varepsilon}),
\be\label{taup0}
L\le \min\{M,\dst\frac{M \kappa}{(n+1)!}\}
\ee
and
\be\label{taup}
\ba{rcl}
0\le \tau\le \tau_m &=& [\max\left\{6\cdot (30)^{n+1}n(n+1),
8n^2(8n(1+n^2)^{n-1}+1)^2\right\}]^{-1}\\[2mm]
0\le P\le P_m &=& [\max\{20\cdot 30^{n}n,
8(1+n^2)^{n-1}\sqrt{n}\}]^{-1}
\;,
\ea\ee
then
the closed-loop system (\ref{sei}), with $i=1$, and
$|e_i(\rho_0)|\le
p_{i}(\rho_0)/2$ for all $i=1,2,\ldots,n$,
is semi-globally asymptotically
and locally exponentially stabilized by the controller (\ref{v.r}).
\end{proposition}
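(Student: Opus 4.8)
The plan is to chain the step-by-step analysis of the nested-saturation loop (Lemmas~\ref{mcs}, \ref{l3} and \ref{isl}) to the cascade Lyapunov--Krasowskii argument of Lemma~\ref{eci}, after first reading off from (\ref{varepsilon})--(\ref{taup}) that every hypothesis invoked along the way holds. From (\ref{varepsilon}) one has $\varepsilon_n=1/30$ and $\varepsilon_{j-1}=\varepsilon_j/30$; since $\tau=\theta/\kappa$ and $6\cdot 30^{n+1}n(n+1)\ge 12$, the lower bound on $\kappa$ gives $\tau\le 1/12$, $\tau\le[6\cdot 30^{n+1}n(n+1)]^{-1}$ (the $\tau_m$ of (\ref{ef})) and $\tau\le[8n^2(8n(1+n^2)^{n-1}+1)^2]^{-1}=[8a^2(8aq+1)^2]^{-1}$ with $a=n$, $q=(1+n^2)^{n-1}$; the bound on $L$ forces $P=n^3(n!)^3L\kappa^{-1}$ to satisfy both $P\le[20\cdot 30^n n]^{-1}$ (the $P_m$ of (\ref{ef})) and $P\le[8\sqrt n(1+n^2)^{n-1}]^{-1}$, the latter being exactly what makes $\varphi(z)=[\,f_1(Z_2)\;\ldots\;f_{n-1}(Z_n)\;0\,]^T$ in (\ref{ep2}) --- which by (\ref{sette}) obeys $|\varphi(z)|\le\sqrt n\,P|z|$ in the linear operating region, where $|Z_{i+1}|\le|z|<1$ --- have gain $\gamma\le\sqrt n\,P\le 1/(8q)$. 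Condition (\ref{taup0}) with $\kappa\ge1$ keeps (\ref{sette}) applicable wherever it is needed (which, by the recursive structure, is always a region where the relevant state has already been made small), and the standing hypothesis $|e_i(\rho_0)|\le p_i(\rho_0)/2$ makes Lemma~\ref{mcs} and Corollary~\ref{cor} available, the latter furnishing the exponential Lyapunov function $V(r,\epsilon)$ for the $\epsilon=(e,p)$ subsystem.

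Next I would run the induction of Section~\ref{section.analysis}. The initial step $i=n$ is checked by noting that $\dot z_n=v(r)$ is a scalar nested saturation perturbed by the ``measurement'' disturbance $e_n$: with $\varepsilon_n=1/30$ and $\tau\le1/12$, Lemma~\ref{l3} gives $||Z_n(\cdot)||_\infty\le\bar{\rm Z}_n$ and $|z_n(r)|\le\varepsilon_n/4$ for $r$ large, while Lemma~\ref{mcs} (void conditions at $i=n$) gives $|e_n(r)|\le p_n(r)/2$ with $p_n$ halved at each reset, hence $|e_n(r)|\le\tfrac1{2n}80^{-2}\varepsilon_n$ eventually. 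Applying Lemma~\ref{isl} for $i=n,n-1,\dots,2$ then propagates the inductive hypothesis down to $i=1$: there is $\bar{\rm Z}_1>0$ with $||Z_1(\cdot)||\le\bar{\rm Z}_1$, one has $|e_j(r)|\le p_j(r)/2$ for all $r\ge\rho_0$ and all $j$, and there is $R_1>\tau$ with $|z_j(r)|\le\varepsilon_j/4$ and $|e_j(r)|\le\tfrac1{2n}80^{-(j+1)}\varepsilon_j$ for all $r\ge R_1$ and $j=1,\dots,n$. I would then observe that for $r\ge R_1+\tau$ the argument $e_j(r)+z_j(r-\tau)+\hat\lambda_{j-1}(r)$ of every $\sigma_j$ in (\ref{v.r}), recalling $|\hat\lambda_{j-1}(r)|\le\varepsilon_{j-1}=\varepsilon_j/30$, has modulus at most $\varepsilon_j/4+\varepsilon_j/30+\tfrac1{2n}80^{-(j+1)}\varepsilon_j<\tfrac{19}{20}\varepsilon_j$, so that $\sigma_j(s)=s$ there, $v(r)=-\sum_{j=1}^n(e_j(r)+z_j(r-\tau))$, and the closed loop coincides with (\ref{ep2}).

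On $[R_1+\tau,\infty)$ I would verify the three conditions of Lemma~\ref{eci}: (i) is the Hurwitz/Lyapunov property of the chain-of-integrators matrix $\Gamma+\Delta$ closed by the linear nested feedback, which holds with $||\Gamma||,||\Delta||\le n$ and $||Q||\le(1+n^2)^{n-1}$ exactly as in \cite{mazenc.et.al.tac04}; (ii) holds with $\gamma\le\sqrt n\,P\le1/(8q)$ by the parameter check of the first paragraph; (iii) is the cascade structure $\epsilon=(e,p)$, with $V$ provided by Corollary~\ref{cor}. Lemma~\ref{eci} (applied with the clock restarted at $R_1+\tau$, so that its Lyapunov--Krasowskii functional starts from the bounded history on $[R_1-\tau,R_1+\tau]$) then yields $|(z(r),\epsilon(r))|\le\mu||(z,\epsilon)_{R_1+\tau}||\exp(-\delta(r-R_1-\tau))$ for $r\ge R_1+\tau$ and some $\mu,\delta>0$.

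It remains to assemble the two regimes. On $[\rho_0,R_1+\tau]$ all of $Z_1,E_1,P_1$ stay bounded --- by $\bar{\rm Z}_1$, by the inductive bounds, and by the exponential decay of $\epsilon$ from Corollary~\ref{cor} --- so, together with the estimate above, there is a compact set $C\ni 0$ and a time $T$ (the $t$-value corresponding to $R_1+\tau$) with $X(t)\in C$ for all $t\ge T$ and $|X(t)|\le k||X_T||\exp(-\delta(t-T))$; pulling this back through the nonsingular matrices $\Phi_i$, the time scale (\ref{cts}) and Lemma~\ref{ds} gives items (i) and (ii) of the Definition for the full state $(x,\omega,\xi,\ell,\psi,\nu)$, while item (iii) of Theorem~\ref{main.result} follows on additionally enlarging $T_m,T_M$ and using the Schur stability of $\Lambda$. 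I expect the main obstacle to be exactly this transition at $r=R_1+\tau$: arranging the finitely many constants produced by Lemmas~\ref{mcs}, \ref{l3} and \ref{isl} so that \emph{all} saturation arguments land in $[-\tfrac{19}{20}\varepsilon_j,\tfrac{19}{20}\varepsilon_j]$ simultaneously, and confirming that the $\Gamma,\Delta,Q,\gamma$ obtained in the linear regime meet the exact numerical thresholds $\gamma\le1/(8q)$, $\tau\le[8a^2(8aq+1)^2]^{-1}$ of Lemma~\ref{eci} --- that is, checking that the constants packaged into (\ref{varepsilon})--(\ref{taup}) are tight enough, which is the one genuinely laborious part.
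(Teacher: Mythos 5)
Your proposal is correct and follows essentially the same route as the paper's proof: verify that (\ref{varepsilon})--(\ref{taup}) imply $\gamma=\sqrt{n}P\le 1/(8q)$ and the hypotheses of Lemmas~\ref{isl} and \ref{eci}, run the induction down to $i=1$ to enter the linear operating region of all saturations, and then invoke the cascade Lyapunov--Krasowskii result of Lemma~\ref{eci} for exponential convergence. The only difference is that you spell out the constant-chasing and the splicing of the two regimes, which the paper dispatches with ``analogously, one can check.''
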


\begin{proof}
Bearing in mind (\ref{sette})
and that $\varepsilon_i<1$, for $i=1,2,\ldots, n$, and $(M\kappa)/(L(n+1)!)\ge 1$, then $\gamma$
in (ii) after (\ref{ep2}) is equal to $\sqrt{n}P$, and the
condition $P\le [8(1+n^2)^{n-1}\sqrt{n}]^{-1}$ in
(\ref{taup}) actually implies $\gamma\le 1/(8q)$. Analogously, one
can check that the remaining requirements on $\tau$ and $P$ in (\ref{taup})
imply that all the conditions in Lemma \ref{isl}
and \ref{eci} are true. These lemma
%(see also Remark \ref{r6})
allow us to infer the thesis. Indeed, by Lemma \ref{isl},
the state after a finite time
starts evolving in the linear operation region for all the saturation
functions. Hence, from a certain time on, the state
exponentially converges to zero, again by Lemma \ref{eci}.
\end{proof}

\medskip

\noindent {\it Proof of Theorem \ref{main.result}:}
The proof of the main result of the paper simply amounts to rephrase
the proposition above in the original coordinates. Details
are straightforward and we omit them. Only observe that the
parameter $\kappa$ is chosen so as to satisfy the requirement on
$\tau$ in (\ref{taup}). Once $\kappa$ is fixed, $L$ is chosen to satisfy
 (\ref{taup0}) and the second inequality in
(\ref{taup}). The
equations of
encoder and the decoder are introduced in (\ref{encoder.delay}) and,
respectively, (\ref{decoder.delay}). The matrices $\Lambda$ and
$\Phi$ appearing in (\ref{encoder.delay}) and,
respectively, (\ref{decoder.delay}) are designed in Lemma
\ref{mcs} and, respectively, defined in (\ref{nsm}).\\
As far as
the minimality of the data rate is concerned, we proceed as in
\cite{depersis.tac05}. Observe that, by definition,
$R_{av}=2n/T_m$, and  to guarantee $R_{av}<\hat R$, we must have
$T_m>
2n/\hat R$. Since $\hat R$ can be arbitrarily close to zero,
$T_m$ can be arbitrarily large. Nevertheless, the stability result
shown above holds for any value of $T_m$, for $T_m$ (and hence $T_M\ge T_m$)
affects
the off-diagonal entries of $A(r)$ and $\Lambda$ only, and the exponential stability
of the $\epsilon$ equations (and therefore of system (\ref{ep2}))
remains true, as it is evident from Lemma \ref{mcs} (see also \cite{depersis.tac05}).
Hence, no matter what $\hat R$ is, we can always stabilize the system with $R_{av}<\hat R$.

\section{Conclusion}
We have shown that minimal data rate stabilization of nonlinear
systems is possible even when the communication channel is affected
by an arbitrarily large transmission delay.
The system has been
modeled as the feedback interconnection of a couple of impulsive nonlinear control systems with the
delay affecting the feedback loop.
In suitable coordinates, the closed-loop system turns out to be
described by a cascade of impulsive delay nonlinear control systems,
and semi-global asymptotic plus  local exponential
stability has been shown. The proof relies, among other things, on
the design of a Lyapunov-Krasowskii functional for an appropriate cascade impulsive
time-delay system. If the encoder is
endowed with a device able to detect abrupt changes in the rate of growth
of $x_n$, or if a dedicated channel is available to inform the
encoder about the transmission delays,
then it is not difficult to derive the same kind of
stability result for the case when the
delays are time-varying and upper-bounded by $\theta$. Similarly, by
adjusting $T_M$ in (\ref{tts}), it is possible to show that the
solution proposed in this paper is also robust with respect to
packet drop-outs, provided that the maximal number of consecutive
drop-outs is available.
The same kind of approach appears to be suitable for other problems
of control over communication channel with finite data rate,
delays and packet drop-out.

\bibliography{encoding_delayed_channel}

\begin{center}
APPENDIX
\end{center}

\noindent This appendix is added for the convenience of the Readers.

\medskip

\begin{center}
Proof of Lemma \ref{coc}
\end{center}

It is shown in \cite{mazenc.et.al.tac04} that,
(\ref{cts}), (\ref{icc}) and
$Z_i(r)=\Phi_i X_i(\kappa r)$
transforms
(\ref{un}) into
\be\label{ssei}
\ba{rcl}
\dot{Z}_i(r)&=&F_i(Z_{i+1}(r), v(r))\\[2mm]
&:=&
\left[\ba{c}
\sum_{j=i+1}^{n}z_j(r)+v(r)+f_i(Z_{i+1}(r))\\
\sum_{j=i+2}^{n}z_j(r)+v(r)+f_{i+1}(Z_{i+2}(r))\\
\vdots\\
v(r)
\ea\right]\;.
\ea\ee
Clearly,
the equation (\ref{ssei}) is equal to the first equation of (\ref{sei}).
Bearing in mind (\ref{ssei}), and  by differentiating $E_i$ defined in (\ref{stcc}), we obtain
\be\label{de}\ba{rcl}
\dot{E}_i(r)&=&F_i(E_{i+1}(r)+Z_{i+1}(r-\tau),v(r-\tau))-F_i(Z_{i+1}(r-\tau),v(r-\tau))
\\
&=&
\Gamma_i (E_{i}(r)+Z_{i}(r-\tau))+ \mathbf{1}_{n-i+1} v(r-\tau)+
\varphi_i(E_{i+1}(r)+Z_{i+1}(r-\tau))
\\
&&
-\Gamma_i (Z_{i}(r-\tau))- \mathbf{1}_{n-i+1} v(r-\tau)-
\varphi_i(Z_{i+1}(r-\tau))
\\
&=& \Gamma_i E_i(r)+
\varphi_i(E_{i+1}(r)+Z_{i+1}(r-\tau))-\varphi_i(Z_{i+1}(r-\tau))
\ea\ee
for $r\ne \rho_k$, while for $r=
\rho_k$,  we have:
%\be\label{r}
%E_i(r)=E_i(r^-)
%+4^{-1}
%{\rm diag}({\rm sgn}(-E_i(r^-)))
%P_{i}(r^-)\;,
%\ee
\be\label{r0}\ba{rcl}
E_i(\rho_k^+)&=& \Phi_i(\Psi_{i}(\kappa \rho_k^+)-X_i(\kappa(\rho_k-\tau)^+))\\
&=& \Phi_i(\Psi_{i}(\theta_k^+)-X_i(t_k^+))\\
&=& \Phi_i(\Psi_{i}(\theta_k)
+(4\Phi_{i})^{-1}
{\rm diag}(Y_i(t_k^+))
N_{i}(\theta_k) -X_i(t_k))\\
&=& \Phi_i(\Psi_{i}(\theta_k)-X_i(t_k))
+ 4^{-1}
{\rm diag}({\rm sgn}(\Phi_i[X_i(t_k)-\Xi_i(t_k)]))
N_{i}(\theta_k)\\[0mm]
&=& \Phi_i(\Psi_{i}(\theta_k)-X_i(t_k))
+ 4^{-1}
{\rm diag}({\rm sgn}(\Phi_i[X_i(t_k)-\Psi_{i}(\theta_k)]))
N_{i}(\theta_k)\;,
\ea\ee
where
the last equality descends from (ii) in Lemma
\ref{ds}, and implies
\be\label{r}
E_i(\rho_k^+)= E_i(\rho_k)
+4^{-1}
{\rm diag}({\rm sgn}(-E_i(\rho_k)))
N_{i}(\theta_k)\;.
\ee
The thesis then follows if we observe that the variable $P_i$
defined in (\ref{stcc}) satisfies
\be\label{p}\ba{rcll}
\dot P_i(r) &=& \mathbf{0}_{n-i+1} & r\ne \rho_k\\
P_i(r^+) &=& \Lambda_i P_i(r) & r= \rho_k\;.
\ea\ee

\begin{center}
Proof of Lemma \ref{mcs}
\end{center}

Recall first the equations in (\ref{sei}) which define
the evolution of $E_i$.
Furthermore, by (\ref{initial}), the definition of $\Phi$, and
(\ref{a.ic}), $|e_j(\rho_0)|\le
p_{j}(\rho_0)/2$ for $j=i,i+1,\ldots, n$.
For $i=n$, as $|e_n(\rho_0)|\le
p_{n}(\rho_0)/2$, it is immediately seen that
\[
|e_n(\rho_0^+)|=| e_n(\rho_0)+4^{-1}
{\rm sgn}(-e_n(\rho_0))
p_{n}(\rho_0)|\le 4^{-1}p_{n}(\rho_0)
\]
which proves that $|e_n(\rho_0^+)|\le p_{n}(\rho_0^+)/2$,
provided that  $\Lambda_n=1/2$. As $\dot{e}_n(r)=0$,
then $|e_n(r)|\le p_{n}(\rho_0^+)/2$ for $r\in [\rho_0,\rho_1)$. As
$\dot{p}_{n}(r)=0$, also $|e_n(\rho_1)|\le p_{n}(\rho_1)/2$,
and iterative arguments prove that $|e_n(r)|\le p_{n}(\rho_{k}^+)/2$
on each interval $[\rho_k,\rho_{k+1})$.
Notice that the single trivial eigenvalue of $\Lambda_n$ is strictly
less than the unity. The equation for $e_i$ in (\ref{sei}) writes as:
\[\ba{rcl}
\dot{e}_i(r) &=& {\bf 1}_{n-i}
E_{i+1}(r)+\varphi_i(E_{i+1}(r)+Z_{i+1}(r-\tau))-\varphi_i(Z_{i+1}(r-\tau))\\
&=& \left({\bf 1}_{n-i}
+
\left[\dst\frac{\partial \varphi_i(y_{i+1})}{\partial y_{i+1}}\right]_{\alpha(r)E_{i+1}(r)+Z_{i+1}(r-\tau)}
\right)E_{i+1}(r)
\;,
\ea\]
with $\alpha(r)\in [0,1]$ for all $r$.
As both $E_{i+1}$ and $Z_{i+1}$ are bounded, it is not hard to see
\cite{depersis.tac05} that there exists a positive real number $F_i$ depending on
$\bar{\rm Z}_{i+1}$ and $\bar \ell$,  such that, for $r\in[\rho_k, \rho_{k+1})$,
\[
e_i(r)\le e_i(\rho_k^+)+F_i(\rho_{k+1}-\rho_k)\dst\sum_{j=i+1}^{n}
p_{j}(\rho_{k}^+)/2\;,
\]
with $|e_i(\rho_0^+)|\le p_{i}(\rho_0)/4$. By iteration, the
thesis is inferred  provided that
\[\ba{rcl}
p_{i}(\rho_{k}^+)&=&\dst\frac{1}{2}p_{i}(\rho_{k})
+F_iT_M{\bf 1}_{n-i}\Lambda_{i+1}P_{i+1}(\rho_{k})\\
&\ge&
\dst\frac{1}{2}p_{i}(\rho_{k})
+F_i(\rho_{k+1}-\rho_k)\dst\sum_{j=i+1}^{n} p_{j}(\rho_{k}^+)\;.
\ea\]
Note that, by the
definition of $p_{i}(\rho_{k}^+)$ above, $P_{i}(\rho_{k}^+)=
\Lambda_i P_{i}(\rho_{k})$, with $\Lambda_i$ the matrix
in (\ref{ai}),
that shows $\Lambda_i$ to be a Schur stable
matrix provided that so is
$\Lambda_{i+1}$.

\end{document}